    \newtheorem{thm}{Theorem}[section]
    \newtheorem{cor}{Corollary}[section]
    \newtheorem{prb}{Problem}[section]
    \newtheorem{rem}{Remark}[section]
\newcommand{\Jcal}{\mathcal{J}}
\newcommand{\Rcal}{\mathcal{R}}
\newcommand{\etab}{\boldsymbol{\eta}}
\newcommand{\varsigmab}{\boldsymbol{\varsigma}}
\newcommand{\phib}{\boldsymbol{\varphi}}
\newcommand{\xib}{\boldsymbol{\xi}}
\newcommand{\omegab}{\boldsymbol{\omega}}
\newcommand{\taub}{\boldsymbol{\tau}}
\newcommand{\deltab}{\boldsymbol{\delta}}
\newcommand{\pb}{\mathbf{p}}
\newcommand{\vb}{\mathbf{v}}
\newcommand{\Xb}{\mathbf{X}}
\newcommand{\Yb}{\mathbf{Y}}
\newcommand{\qb}{\mathbf{q}}
\newcommand{\Rb}{\mathbf{R}}
\newcommand{\Mb}{\mathbf{M}}
\newcommand{\Wb}{\mathbf{W}}
\newcommand{\Jb}{\mathbf{J}}
\newcommand{\Ib}{\mathbf{I}}
\newcommand{\gb}{\mathbf{g}}
\newcommand{\Omegab}{\mathbf{\Omega}}
\newcommand{\Lambdab}{\mathbf{\Lambda}}
\newcommand{\Trm}{\textrm{T}}
\newcommand{\Tsf}{\textsf{T}}
\newcommand{\Gsf}{\textsf{G}}
\newcommand{\SO}{\textsf{SO}(3)}
\newcommand{\TSO}{\textsf{TSO}(3)}
\newcommand{\SOtwo}{\textsf{SO}(2)}
\newcommand{\SEthree}{\textsf{SE}(3)}
\newcommand{\SU}{\textsf{SU}(N)}
\newcommand{\so}{\mathfrak{so}}
\newcommand{\drm}{{\rm d}}
\newcommand{\Ad}{{\rm Ad}}
\newcommand{\ad}{{\rm ad}}
\newcommand{\be}{\begin{eqnarray}}
\newcommand{\ee}{\end{eqnarray}}
\newcommand{\nn}{\nonumber}
\newcommand{\beas}{\begin{eqnarray*}}
\newcommand{\eeas}{\end{eqnarray*}}
\newcommand{\la}{\label}
\begin{document}

\title{Geometric structure-preserving optimal control of the rigid body}

\author{Anthony M. Bloch}
\address{Alexander Ziwet Collegiate Professor of Mathematics and Department Chair, Mathematics, University of Michigan}
\email{abloch@umich.edu}
\author{Islam I. Hussein}
\address{Assistant Professor, Mechanical Engineering, Worcester Polytechnic Institute}
\email{ihussein@wpi.edu}
\author{Melvin Leok}
\address{Assistant Professor, Mathematics, Purdue University}
\email{mleok@math.purdue.edu}
\author{Amit K. Sanyal}
\address{Assistant Professor, Mechanical Engineering, University of Hawaii}
\email{aksanyal@hawaii.edu}

\date{Received: date / Revised: date}

\maketitle

\begin{abstract}

In this paper we study a discrete variational optimal control
problem for the rigid body. The cost to be minimized is the external
torque applied to move the rigid body from an initial condition to a
pre-specified terminal condition. Instead of discretizing the
equations of motion, we use the discrete equations obtained from the
discrete Lagrange--d'Alembert principle, a process that better
approximates the equations of motion. Within the discrete-time
setting, these two approaches are not equivalent in general. The
kinematics are discretized using a natural Lie-algebraic formulation
that guarantees that the flow remains on the Lie group $\SO$ and its
algebra $\so(3)$. We use Lagrange's method for constrained problems
in the calculus of variations to derive the discrete-time necessary
conditions. We give a numerical example for a three-dimensional
rigid body maneuver.

\end{abstract}

\section{Introduction}\label{sec:intro}

This paper deals with a structure-preserving computational approach
to the optimal control problem of minimizing the control effort
necessary to perform an attitude transfer from an initial state to a
prescribed final state, in the absence of a potential field. The
configuration of the rigid body is given by the rotation matrix from
the body frame to the spatial frame, which is an element of the
group of orientation-preserving isometries in $\mathbb{R}^3$. The
state of the rigid body is described by the rotation matrix and its
angular velocity.

To motivate the computational approach we adopt in the discrete-time
case, we first revisit the variational continuous-time optimal
control problem. The continuous-time extremal solutions to this
optimal control problem have certain special features, since they
arise from variational principles. General numerical integration
methods, including the popular Runge-Kutta schemes, typically
preserve neither first integrals nor the characteristics of the
configuration space. Geometric integrators are the class of
numerical integration schemes that preserve such properties, and a
good survey can be found in \cite{Hairer:02}.
Techniques particular to Hamiltonian systems are also discussed in
\cite{Leimkuhler:04} and \cite{SanzSerna:94}.

Our approach to discretizing the optimal control problem is in
contrast to traditional techniques such as collocation, wherein the
continuous equations of motion are imposed as constraints at a set
of collocation points. In our approach, modeled after
\cite{Junge:05}, the discrete equations of motion are derived from a
discrete variational principle, and this induces constraints on the
configuration at each discrete time step.

This approach yields discrete dynamics that are more faithful to the continuous
equations of motion, and consequently yields more accurate solutions to the
optimal control problem that is being approximated. This feature is extremely important in computing accurate (sub)optimal trajectories for long-term spacecraft attitude maneuvers. For example, in \cite{hussein:03c}, the authors propose an imaging spacecraft formation design that requires a continuous
attitude maneuver over a period of 77 days in a low Earth orbit. Hence,
attitude maneuver has to be very accurate to meet tight imaging constraints
over long time ranges.

While the discrete optimal control method presented here is illustrated using
the Lie group $\SO$ of rotation matrices, and its corresponding Lie algebra
$\so(3)$ of skew-symmetric matrices, we have derived the method with sufficient
generality to address the problem of optimal control on arbitrary Lie groups
with the drift vector field given by geodesic flow on the group, and it
therefore widely applicable.
For example, in inter-planetary orbit
transfers (see, for example, \cite{NewHorizons:06}), one is interested in
computing optimal or suboptimal trajectories on the group of rigid body motions
$\SEthree$ with a high degree of accuracy. Similar requirements also apply to
the control of quantum systems. For example, efficient construction of quantum
gates is a problem on the unitary Lie group $\SU$. This is an optimal control
problem, where one wishes to steer the identity operator to the  desired
unitary operator (see, for example, \cite{KhanejaGlaserBrockett:02} and
\cite{PalaoKosloff:02}).

Moreover, an important feature of the way we discretize the optimal
control problem is that it is $\SO$-equivariant. The
$\SO$-equivariance of our numerical method is desirable, since it
ensures that our results do not depend on the choice of coordinates
and coordinate frames. This is in contrast to methods based on
coordinatizing the rotation group using quaternions, (modified)
Rodrigues parameters, and Euler angles, as given in the survey
\cite{ScriTho94}. Even if the optimal cost function is
$\SO$-invariant, as in \cite{ScJuRo96}, the use of generalized
coordinates imposes constraints on the attitude kinematics.

For the purpose of numerical simulation, the corresponding discrete
optimal control problem is posed on the discrete state space as a
two stage discrete variational problem. In the first step, we derive
the discrete dynamics for the rigid body in the context of discrete
variational mechanics \cite{marsden:01}. This is achieved by
considering the discrete Lagrange--d'Alembert variational principle
\cite{Kane:00} in combination with essential ideas from Lie group
methods \cite{Iserles:00}, which yields a Lie group variational
integrator \cite{Leok:04}. This integrator explicitly preserves the
Lie group structure of the configuration space, and is similar to
the integrators introduced in \cite{Lee:05} for a rigid body in an
external field, and in \cite{Lee:05b} for full body dynamics. These
discrete equations are then imposed as constraints to be satisfied
by the extremal solutions to the discrete optimal control problem,
and we obtain the discrete extremal solutions in terms of the given
terminal states.

The paper is organized as follows. As motivation, in Section
\ref{sec:continuous_time}, we study the minimum control effort optimal control
problem in continuous-time. In Section \ref{sec:discrete_time}, we study the
corresponding discrete-time optimal control problem. In Section
\ref{subsec:formulation_discrete_example} we state the optimal control problem
and describe our approach. In Section \ref{sec:discrete_lag_RB}, we derive the
discrete-time equations of motion for the rigid body starting with the discrete
Lagrange--d'Alembert principle. These equations are used in Section
\ref{subsec:Discrete_Variational} to obtain the solution to the discrete
optimal control problem.  In Section \ref{sec:numerics}, we describe an
algorithm for solving the general nonlinear, implicit necessary conditions for
$\SO$ and give numerical examples for rest-to-rest and slew-up spacecraft maneuvers.

\section{Continuous-Time Results}\label{sec:continuous_time}

\subsection{Problem
Formulation}\label{subsec:formulation_continuous_example}

In this paper, the
natural pairing between $\so^*(3)$ and $\so(3)$ is denoted by
$\left<\cdot,\cdot\right>$. Let $\ll\cdot,\cdot\gg$ and $\ll\cdot,\cdot\gg_*$ denote
the standard (induced by the Killing form) inner product on $\so(3)$ and $\so^*(3)$,
respectively. The inner product $\ll\cdot,\cdot\gg_*$ is naturally
induced from the standard norm $\ll\xib,\omegab\gg=-\frac{1}{2}{\rm Tr}(\xib^T\omegab)$, for all $\xib,\omegab\in\so(3)$,
through\be\label{eq:inner_products}\ll\etab,\phib\gg_*&=&\left<\etab,\phib^{\sharp}\right>=\left<\etab,\omegab\right>=\left<\xib^{\flat},\omegab\right>\nn\\&=&\ll\xib,\omegab\gg,\ee where $\phib=\omegab^{\flat}\in\so^*(3)$ and
$\etab=\xib^{\flat}\in\so^*(3)$, with $\xib,\omegab\in\so(3)$ and $\flat$ and $\sharp$ are the musical isomorphisms with respect to the standard metric $\ll\cdot,\cdot\gg$. On $\so(3)$, these isomorphisms correspond to the transpose operation. That is, we have $\phib=\omegab^{\Trm}$ and
$\etab=\xib^{\Trm}$.

Let $\Jb:\so(3)\rightarrow\so^*(3)$ be the positive definite inertia operator. It can be shown that
\be\label{eq:inertia_property}\left<\Jb(\xib),\omegab\right>=\left<\Jb(\omegab),\xib\right>.\ee On $\so(3)$, $\Jb$ is given by $\Jb(\xib)=J\xib+\xib J$, where
$J$ is a positive definite symmetric matrix (see, for example, \cite{bloch:03,hussein:CDC05}). Moreover, we also have
$\Jb(\etab^{\sharp})^{\sharp}=(J\etab^{\Trm}+\etab^{\Trm}J)^{\Trm}=\Jb(\etab)$,
which is an abuse of notation since $\etab\in\so^*(3)$. For the sake of generality and mathematical precision
we will use the general definitions, though it helps to keep the above identifications
for $\so(3)$ in mind.

In this section we review some continuous-time optimal control
results using a simple optimal control example on $\SO$. The problem
we consider is that of minimizing the norm squared of the control
torque $\taub\in\so^*(3)$ applied to rotate a rigid body subject to the Lagrange--d'Alembert principle for the rigid
body\footnote{This is equivalent to constraining the problem to
satisfy the rigid body equations of motion given by
equations (\ref{eq:SO3_eoms_example}). However, for the sake
of generality that will be appreciated in the discrete-time problem,
we choose to treat the Lagrange--d'Alembert principle as the
constraint as opposed to the rigid body equations of motion. Both
are equivalent in the continuous-time case but are generally not equivalent in the discrete-time case.}
whose configuration is given by $\Rb\in\SO$ and body angular
velocity is given by $\Omegab\in\so(3)$. We require that the system
evolve from an initial state $(\Rb_0,\Omegab_0)$ to a final state
$(\Rb_T,\Omegab_T)$ at a fixed terminal time $T$.

Before proceeding with a statement of the optimal control problem, we first
define variations of the rigid body configuration $\Rb$ and its velocity
$\Omegab$. Let $\Wb(t)\in\so(3)$ be the variation vector field associated with
a curve $\Rb(t)$ on $\SO$ \cite{agrachev:04,Warner:73}. The vector field
$\Wb(t)$ satisfies \be
\deltab\Rb(t)
  = \Rb\Wb\in\Tsf_{\Rb(t)}\SO, ~\Wb(0)=\mathbf{0}, ~\Wb(T)=\mathbf{0},\nn
\ee where $\deltab\Rb$ is defined by $\deltab\Rb(t) =
  \partial\Rb_{\epsilon}(t)/\partial\epsilon\big|_{\epsilon=0}$, with $\Rb_{\epsilon}(t):=\Rb(t,\epsilon)$ is the variation of
the curve $\Rb(t)$ that satisfies $\Rb(t,0)=\Rb(t)$. The variation in the
velocity vector field is denoted $\deltab\Omegab$. For a deeper understanding
of variations of general vector fields, see for example the treatment in
\cite{hussein:04CDCb}.

We now state the minimum control effort optimal control problem.

\begin{prb}\label{pb:SO3_example0} Minimize
\be\label{eq:SO3_cost_LD_example0}
  \Jcal &=&
  \frac{1}{2}\int_0^T\ll\taub,\taub\gg_*\drm t
\ee subject to
\begin{enumerate}
    \item satisfying \emph{Lagrange--d'Alembert principle}:
     \be\label{eq:SO3_LD_example0}
      \deltab\int_0^T\frac{1}{2}\left<\Jb\left(\Omegab\right),\Omegab\right>\drm t+\int_0^T\left<\taub,\Wb\right>\drm t=0,
    \ee
    for a variation vector field $\Wb(t)$, and subject to $\dot{\Rb}=\Rb\Omegab$,
    \item and the \emph{boundary conditions} \be\label{eq:SO3_bcs_example0}
    \Rb(0)&=&\Rb_0, ~\Omegab(0)=\Omegab_0,\nn\\ \Rb(T)&=&\Rb_T,
    ~\Omegab(T)=\Omegab_T.\ee
\end{enumerate}
\end{prb}

We now show that the constraint of satisfying the Lagrange--d'Alembert principle leads to the following problem formulation,
where the rigid body equations of motion replace the Lagrange--d'Alembert principle.

\begin{prb}\label{pb:SO3_example} Minimize
\be\label{eq:SO3_cost_example}
  \Jcal &=&
  \frac{1}{2}\int_0^T\ll\taub,\taub\gg_*\drm t
\ee subject to
\begin{enumerate}
    \item the \emph{dynamics} \be\label{eq:SO3_eoms_example}
      \dot{\Rb} &=& \Rb\Omegab\\
      \dot{\Mb} &=&\ad_{\Omegab}^*\Mb+\taub=
      \left[\Mb,\Omegab\right]+\taub,\nn\ee
      where $\Mb=\Jb(\Omegab)\in\so^*(3)$ is the momentum,
    \item and the \emph{boundary conditions} \be\label{eq:SO3_bcs_example}
    \Rb(0)&=&\Rb_0, ~\Omegab(0)=\Omegab_0,\nn\\ \Rb(T)&=&\Rb_T, ~\Omegab(T)=\Omegab_T.
\ee
\end{enumerate}
\end{prb}

In the above, $\ad^*$ is the dual of the adjoint representation,
$\ad$, of $\so(3)$ and is given by
$\ad_{\xib}^*\etab=-[\xib,\etab]\in\so^*(3)$, for all $\xib\in\so(3)$ and
$\etab\in\so^*(3)$. Recall that the bracket is defined by
$\left[\xib,\omegab\right]=\xib\omegab-\omegab\xib$.

\subsection{The Lagrange--d'Alembert Principle and the Rigid
Body Equations of Motion}\label{subsec:LD_continuous_example}

In this section we derive the forced rigid body equations of motion
(equations (\ref{eq:SO3_eoms_example})) from the Lagrange--d'Alembert
principle. We begin by appending the constraint
$\dot{\Rb}=\Rb\Omegab$ to the Lagrangian \be
    0&=&\deltab\int_0^T\left(\frac{1}{2}\left<\Jb\left(\Omegab\right),\Omegab\right>+\left<\Lambdab,\Rb^{-1}\dot{\Rb}-\Omegab\right>\right)\drm t\nn\\&&
    +\int_0^T\left<\taub,\Wb\right>\drm t,\nn
\ee where $\Lambdab\in\so^*(3)$ is a Lagrange multiplier. Taking variations, we
obtain \be\label{eq:SO3_cost_LD_variations}
  0&=& \int_0^T
  \bigg(\left<-\dot{\Lambdab}-\left[\Omegab,\Lambdab\right]+\taub,\Wb\right> +\left<-\Lambdab+\Jb\left(\Omegab\right),\deltab\Omegab\right>\bigg)\drm
   t\nn\\&&+\left[\left<\Lambdab,\Wb(t)\right>\right]_0^T.\nn
\ee

The term outside the integral vanishes by virtue of the boundary conditions on
$\Wb(t)$. Since $\deltab\Omegab$ and $\Wb$ are arbitrary and independent, we
must have $\big<-\Lambdab+\Jb\left(\Omegab\right),\deltab\Omegab\big>=0$ and
hence $\Lambdab=\Jb\left(\Omegab\right)$ which is the body angular momentum.
For the remainder of this section, we set
$\Mb=\Lambdab=\Jb\left(\Omegab\right)$. The first term in the integrand gives
us the forced second order dynamics of the rigid body which is \be
\dot{\Mb}=\left[\Mb,\Omegab\right]+\taub.\nn\ee This completes the proof that
the Problem (\ref{pb:SO3_example0}) is equivalent to Problem
(\ref{pb:SO3_example}).

\emph{\textbf{A Direct Approach.}} We now give a direct derivation that does
not involve the use of Lagrange multipliers. This approach can be found in
Section 13.5 in \cite{marsden:99}. First, we take variations of the kinematic
condition $\Omegab=\Rb^{-1}\dot\Rb$ to obtain $\deltab\Omegab =
  -\Rb^{-1}\left(\deltab\Rb\right)\Rb^{-1}\dot{\Rb}+\Rb^{-1}\left(\deltab\dot{\Rb}\right)$. As defined previously, we have $\Wb=\Rb^{-1}\deltab\Rb$ and,
therefore, $\dot{\Wb}=-\Rb^{-1}\dot{\Rb}\Rb^{-1}\deltab\Rb+\Rb^{-1}
\deltab\dot\Rb=-\Omegab\Wb+\Rb^{-1}\deltab\dot\Rb$, since
$\deltab\dot{\Rb}=\frac{\drm}{\drm t}\deltab\Rb$ (see for example
\cite{milnor:63}, page 52). Hence, we have
\be\label{eq:direct_variation_cont_example}
  \deltab\Omegab =
  -\Wb\Omegab+\Omegab\Wb+\dot{\Wb}
  =\ad_{\Omega}\Wb+\dot{\Wb}.
\ee Taking variations of the Lagrange--d'Alembert principle we obtain \be
    \int_0^T\left<\Jb\left(\Omegab\right),\deltab\Omegab\right>+\left<\taub,\Wb\right>\drm t=0.\nn
\ee Using the variation in equation
(\ref{eq:direct_variation_cont_example}) and integrating by parts,
we obtain \be
    0=\int_0^T\left<-\dot{\Mb}+\ad_{\Omega}^*\Mb+\taub,\Wb\right>\drm
    t+\left[\left<\Jb\left(\Omegab\right),\Wb(t)\right>\right]_0^T,\nn
\ee where we have used the identity
\be\label{eq:ad_property}\left<\etab,\ad_{\omegab}\xib\right>=\left<\ad_{\omegab}^*\etab,\xib\right>,
~\etab\in\so^*(3), ~\omegab,\xib\in\so(3).\ee This gives the desired result,
with $\Mb=\Jb\left(\Omegab\right)$.

In Section \ref{subsec:Continuous_Variational}, we demonstrate how
the necessary conditions for Problem (\ref{pb:SO3_example}) are
derived using a variational approach.

\subsection{Continuous-Time Variational Optimal Control Problem}\label{subsec:Continuous_Variational}

A direct variational approach is used here to obtain the
differential equation that satisfies the optimal control Problem
(\ref{pb:SO3_example}).

\emph{\textbf{A Second Order Direct Approach.}} ``Second order" is
used here to reflect the fact that we now study variations of second
order dynamical equations as opposed to the kinematic direct
approach studied in Section \ref{subsec:LD_continuous_example}. We
now give the resulting necessary conditions using a direct approach
as in \cite{marsden:99}. We already computed the variations of $\Rb$
and $\Omegab$. These were as follows: $\deltab\Rb=\Rb\Wb$ and
$\deltab\Omegab=\ad_{\Omegab}\Wb+\dot{\Wb}$. We now compute the
variation of $\dot{\Mb}$ with the goal of obtaining the proper
variations for $\taub$: \be
  \deltab\dot{\Mb} &=&
  \Jb\left(\deltab\dot{\Omegab}\right)=\Jb\left(\frac{\drm}{\drm
  t}\deltab\Omegab+\Rcal\left(\Wb,\Omegab\right)\Omegab\right),\nn
\ee where $\Rcal$ is the curvature tensor on SO(3). The curvature
tensor $\Rcal$ arises due to the identity (see \cite{milnor:63},
page 52) \be\label{eq:curvature}
    \frac{\partial}{\partial\epsilon}\frac{\partial}{\partial t}\mathbf{Y}-
    \frac{\partial}{\partial t}\frac{\partial}{\partial
    \epsilon}\mathbf{Y}=\Rcal(\Wb,\mathbf{Y})\Omegab,\nn
\ee where $\mathbf{Y}\in\TSO$ is any vector field along the curve
$\Rb(t)\in\SO$. Taking variations of
$\dot{\Mb}=\ad_{\Omegab}^*\Mb+\taub$, we obtain $\deltab\dot{\Mb}
  =\ad_{\deltab\Omegab}^*\Mb+\ad_{\Omegab}^*\deltab\Mb+
  \deltab\taub$. We now have the desired variation in $\taub$:
\be\la{eq:tau_variations}
  \deltab\taub &=&
  \Jb\left(\Rcal\left(\Wb,\Omegab\right)\Omegab\right)+\frac{\drm }{\drm
  t}\Jb\left(\deltab\Omegab\right)-\ad_{\deltab\Omegab}^*\Mb-\ad_{\Omegab}^*\deltab\Mb.
\ee Taking variations of the cost functional (\ref{eq:SO3_cost_example}) we
obtain: \be
  &&\deltab\Jcal =   \int_0^T\bigg(\big<\Jb(\ddot{\varsigma})-\ad_{\Omegab}^*\left(\Jb(\dot{\varsigma})\right)+\dot{\etab}-\frac{\drm}{\drm
  t}\left(\ad_{\varsigmab}^*\Mb\right)\nn\\
  &&+\left[\Rcal\left(\Jb(\varsigmab)^{\sharp},\Omegab\right)\Omegab\right]^{\flat}+\ad_{\Omegab}^*\ad_{\varsigmab}^*\Mb
  -\ad_{\Omegab}^*\etab,\Wb\big>\bigg)\drm
  t,\nn
\ee where $\varsigmab=\taub^{\sharp}\in\so(3)$ and
$\etab=\Jb\left(\ad_{\Omegab}\varsigmab\right)\in\so^*(3)$. In obtaining the
above expression, we have used integration by parts and the boundary conditions
(\ref{eq:SO3_bcs_example}), equations (\ref{eq:direct_variation_cont_example})
and (\ref{eq:tau_variations}), and the identities (\ref{eq:inner_products}),
(\ref{eq:inertia_property}) and (\ref{eq:ad_property}). Hence, we have the
following theorem.

\begin{thm}\label{thm:SO3_example2} The necessary optimality conditions for the problem of minimizing
(\ref{eq:SO3_cost_example}) subject to the dynamics
(\ref{eq:SO3_eoms_example}) and the boundary conditions
(\ref{eq:SO3_bcs_example}) are given by the single fourth
order\footnote{Second order in $\taub$ and fourth order in $\Rb$.}
differential equation \be
  0 &=& \Jb(\ddot{\varsigma})-\ad_{\Omegab}^*\left(\Jb(\dot{\varsigma})\right)+\dot{\etab}-\frac{\drm}{\drm
  t}\left(\ad_{\varsigmab}^*\Mb\right)\nn\\
  &&+\left[\Rcal\left(\left(\Jb(\varsigmab)\right)^{\sharp},\Omegab\right)\Omegab\right]^{\flat}+\ad_{\Omegab}^*\left(\ad_{\varsigmab}^*\Mb\right)
  -\ad_{\Omegab}^*\etab,\nn
\ee as well as the equations (\ref{eq:SO3_eoms_example}) and the
boundary conditions (\ref{eq:SO3_bcs_example}), where $\varsigmab$
and $\etab$ are as defined above.
\end{thm}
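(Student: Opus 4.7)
The plan is to mimic the \emph{second order direct approach} used just above to derive the rigid body equations of motion, but now applied to the cost functional $\Jcal$. Since every extremal must satisfy the dynamics (\ref{eq:SO3_eoms_example}), I will view $\taub$ as a dependent quantity determined by $\Rb$ and $\Omegab$ through $\dot{\Mb}=\ad_{\Omegab}^*\Mb+\taub$, compute $\deltab\taub$ in terms of the free variation vector field $\Wb$, substitute into $\deltab\Jcal=0$, and read off the Euler--Lagrange equation via the fundamental lemma.

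First I would record the variations already established in Section \ref{subsec:LD_continuous_example}, namely $\deltab\Rb=\Rb\Wb$ and $\deltab\Omegab=\ad_{\Omegab}\Wb+\dot{\Wb}$. Next I would differentiate $\Mb=\Jb(\Omegab)$ and use the curvature identity (\ref{eq:curvature}) to interchange the $t$- and $\epsilon$-derivatives, obtaining $\deltab\dot{\Mb}=\Jb\bigl(\tfrac{\drm}{\drm t}\deltab\Omegab+\Rcal(\Wb,\Omegab)\Omegab\bigr)$. Taking variations of the second equation in (\ref{eq:SO3_eoms_example}) directly gives $\deltab\dot{\Mb}=\ad_{\deltab\Omegab}^*\Mb+\ad_{\Omegab}^*\deltab\Mb+\deltab\taub$, and equating the two expressions yields formula (\ref{eq:tau_variations}) for $\deltab\taub$.

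With $\deltab\taub$ in hand, I would compute $\deltab\Jcal=\int_0^T\ll\taub,\deltab\taub\gg_*\drm t$ and, using the musical isomorphism identities (\ref{eq:inner_products}), rewrite the integrand as the duality pairing $\left<\deltab\taub,\varsigmab\right>$ with $\varsigmab=\taub^{\sharp}\in\so(3)$. Substituting (\ref{eq:tau_variations}) and repeatedly applying (\ref{eq:inertia_property}) and (\ref{eq:ad_property}), I would push every derivative off $\Wb$ and $\dot{\Wb}$ (which are hidden inside $\deltab\Omegab=\ad_{\Omegab}\Wb+\dot{\Wb}$) and onto $\varsigmab$ by integration by parts. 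The resulting boundary terms vanish because $\Wb(0)=\Wb(T)=\mathbf{0}$, and the interior integrand collapses to a single pairing of the form $\left<F(\varsigmab,\Omegab,\Mb),\Wb\right>$. Since $\Wb$ is arbitrary on $(0,T)$, the fundamental lemma forces $F=0$, which is exactly the fourth-order equation in the statement, with the auxiliary quantity $\etab=\Jb(\ad_{\Omegab}\varsigmab)$ appearing naturally as the dual expression generated by the $\ad_{\deltab\Omegab}^*\Mb$ term after using (\ref{eq:ad_property}).

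The main obstacle is the bookkeeping in the integration-by-parts step: the term $\ad_{\deltab\Omegab}^*\Mb$ must be split according to the decomposition $\deltab\Omegab=\ad_{\Omegab}\Wb+\dot{\Wb}$, and each piece transported across the pairing using (\ref{eq:ad_property}) in the correct direction. One must also be consistent about where the musical isomorphisms $\flat$ and $\sharp$ are inserted when handling the curvature contribution, since $\Rcal(\cdot,\Omegab)\Omegab$ naturally lives in $\so(3)$ while the Euler--Lagrange equation is assembled in $\so^*(3)$; this is precisely why the curvature appears in the final equation as $[\Rcal(\Jb(\varsigmab)^{\sharp},\Omegab)\Omegab]^{\flat}$. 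Once this bookkeeping is executed carefully, no further nontrivial estimate is needed and the theorem follows.
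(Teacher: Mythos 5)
Your proposal reproduces the paper's own ``second order direct approach'' essentially verbatim: the same variations $\deltab\Rb=\Rb\Wb$, $\deltab\Omegab=\ad_{\Omegab}\Wb+\dot{\Wb}$, the same use of the curvature identity to obtain (\ref{eq:tau_variations}), and the same substitution into $\deltab\Jcal$ followed by integration by parts and the fundamental lemma. The only point worth tightening is that the boundary terms involve $\dot{\Wb}$ as well as $\Wb$, so you need the full boundary conditions (\ref{eq:SO3_bcs_example}) --- in particular $\deltab\Omegab(0)=\deltab\Omegab(T)=\mathbf{0}$, which together with $\Wb(0)=\Wb(T)=\mathbf{0}$ forces $\dot{\Wb}$ to vanish at the endpoints --- not just the vanishing of $\Wb$.
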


Note that for a compact semi-simple Lie group $\Gsf$ with Lie algebra
$\mathfrak{g}$, the curvature tensor, with respect to a bi-invariant metric, is
given by (see \cite{milnor:63}): \be\label{eq:curvature_compact}
\Rcal\left(\mathbf{X},\mathbf{Y}\right)\mathbf{Z}=\frac{1}{4}\ad_{\ad_{\Xb}\Yb}\mathbf{Z},
\ee for all $\mathbf{X},\mathbf{Y},\mathbf{Z}\in\mathfrak{g}$.

Using a Lagrange multiplier approach as in Section
\ref{subsec:LD_continuous_example}, we may show that the result of
Theorem \ref{thm:SO3_example2} is equivalent to the following
theorem.

\begin{thm}\label{thm:SO3_example} The necessary optimality conditions
for the problem of minimizing (\ref{eq:SO3_cost_example}) subject to
the dynamics (\ref{eq:SO3_eoms_example}) and the boundary conditions
(\ref{eq:SO3_bcs_example}) are given by
\be\label{eq:SO3_theorem_example}
  \taub &=& \Lambdab_2\nn \\
\dot{\Lambdab}_1&=&\left[\Rcal\left(\Jb\left(\Lambdab_2\right)^{\sharp},\Omegab\right)\Omegab\right]^{\flat}+\ad_{\Omegab}^*\Lambdab_1\\
\dot{\Lambdab}_2&=&-\Jb^{-1}\left(\Lambdab_1\right)-\ad_{\Omegab}\Lambdab_2+\Jb^{-1}\left(\ad_{\Lambdab_2}^*\Mb\right)\nn
\ee as well as the equations (\ref{eq:SO3_eoms_example}) and the
boundary conditions (\ref{eq:SO3_bcs_example}), where the Lagrange multipliers
$\Lambdab_1\in\so^*(3), ~\Lambdab_2\in\so(3)$ correspond to the kinematic and dynamics constraints (\ref{eq:SO3_eoms_example}), respectively.
\end{thm}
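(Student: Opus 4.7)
The plan is to obtain Theorem \ref{thm:SO3_example} by an augmented-cost Lagrange multiplier calculation paralleling Section \ref{subsec:LD_continuous_example}, appending both constraints of Problem \ref{pb:SO3_example} to the cost:
\[
\Jcal_{\text{aug}} = \int_0^T\left[\tfrac{1}{2}\ll\taub,\taub\gg_* + \left<\Lambdab_1,\Rb^{-1}\dot{\Rb}-\Omegab\right> + \left<\dot{\Mb}-\ad^*_\Omegab\Mb-\taub,\Lambdab_2\right>\right]\drm t,
\]
with $\Lambdab_1\in\so^*(3)$ enforcing the kinematic constraint and $\Lambdab_2\in\so(3)$ enforcing the second-order dynamics, and then setting the first variations in $\Rb$, $\Omegab$, and $\taub$ to zero.

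Taking the variations in order: (i) the $\delta\taub$ equation together with (\ref{eq:inner_products}) immediately gives $\taub=\Lambdab_2$ (under the musical-isomorphism identification of $\so(3)$ with $\so^*(3)$), which is the first equation of the theorem; (ii) the $\delta\Omegab$ equation combines $-\left<\Lambdab_1,\delta\Omegab\right>$ with the contributions of $\dot\Mb=\Jb(\dot\Omegab)$ and $\ad^*_\Omegab\Jb(\Omegab)$, after which the self-adjointness (\ref{eq:inertia_property}) of $\Jb$, the adjoint identity (\ref{eq:ad_property}), and an integration by parts using the fixed endpoints (\ref{eq:SO3_bcs_example}) yield the third equation, $\dot\Lambdab_2=-\Jb^{-1}(\Lambdab_1)-\ad_\Omegab\Lambdab_2+\Jb^{-1}(\ad^*_{\Lambdab_2}\Mb)$; (iii) the $\delta\Rb$ variation, taken with $\Wb=\Rb^{-1}\delta\Rb$ and $\delta(\Rb^{-1}\dot\Rb)=\ad_\Omegab\Wb+\dot\Wb$, integrated by parts against $\Wb(0)=\Wb(T)=\mathbf{0}$, then produces the adjoint-costate equation for $\dot\Lambdab_1$.

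The main obstacle is the appearance of the curvature term $[\Rcal(\Jb(\Lambdab_2)^\sharp,\Omegab)\Omegab]^\flat$ in the $\dot\Lambdab_1$ equation: a purely algebraic $\delta\Rb$ calculation with $\Omegab$ treated as a free $\so(3)$-valued curve yields only $\dot\Lambdab_1=\ad^*_\Omegab\Lambdab_1$, so one must interpret $\Omegab(t)$ geometrically as a vector field along $\Rb(t)\in\SO$ and commute $\delta$ with $\drm/\drm t$ using (\ref{eq:curvature}), exactly as in the derivation of (\ref{eq:tau_variations}) in the proof of Theorem \ref{thm:SO3_example2}. The most transparent route, which I would take to avoid this subtlety, is direct verification of the equivalence with Theorem \ref{thm:SO3_example2}: set $\Lambdab_2=\varsigmab$ and $\Lambdab_1:=-\Jb(\dot\varsigmab)-\etab+\ad^*_\varsigmab\Mb$, so that the third equation of Theorem \ref{thm:SO3_example} reduces tautologically to the defining relation $\etab=\Jb(\ad_\Omegab\varsigmab)$; then differentiating $\Lambdab_1$ and substituting the fourth-order condition of Theorem \ref{thm:SO3_example2} produces exactly $\dot\Lambdab_1-\ad^*_\Omegab\Lambdab_1=[\Rcal(\Jb(\Lambdab_2)^\sharp,\Omegab)\Omegab]^\flat$. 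The reverse substitution recovers the single fourth-order equation, establishing equivalence of the two sets of necessary conditions and completing the proof.
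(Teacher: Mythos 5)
Your proposal is correct and follows essentially the same route as the paper: the paper also obtains Theorem \ref{thm:SO3_example} by appealing to the Lagrange multiplier construction and then establishing equivalence with Theorem \ref{thm:SO3_example2} (the Corollary's one-line proof: differentiate $\Lambdab_2$ and eliminate the multipliers), which is exactly your substitution $\Lambdab_2=\varsigmab$, $\Lambdab_1=-\Jb(\dot\varsigmab)-\etab+\ad^*_{\varsigmab}\Mb$ run in the constructive direction. Your added observation about where the curvature term must enter a standalone multiplier derivation is a useful clarification that the paper leaves implicit, but it does not change the substance of the argument.
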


\begin{rem} Note that the equations of motion that
arise from the Lagrange--d'Alembert principle are used to define the
dynamic constraints. So, in effect, we are minimizing $\Jcal$
subject to satisfying the Lagrange--d'Alembert principle for the
rigid body. Analogously, the discrete version of the
Lagrange--d'Alembert principle will be used to derive the discrete
equations of motion in the discrete optimal control problem to be
studied in Section \ref{subsec:Discrete_Variational}. This view is
in line with the approach in \cite{Junge:05} in that we do not
discretize the equations of motion directly, but, instead, we
discretize the Lagrange--d'Alembert principle. These two approaches
are not equivalent in general.\end{rem}

\begin{cor} The necessary optimality conditions of Theorem \ref{thm:SO3_example2} are equivalent to the necessary conditions of
Theorem \ref{thm:SO3_example}.\end{cor}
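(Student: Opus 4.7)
The plan is to establish equivalence in two directions: first derive the system of Theorem \ref{thm:SO3_example} from first principles by a standard Lagrange-multiplier augmentation of the cost, and then eliminate the multipliers algebraically to recover the fourth-order equation of Theorem \ref{thm:SO3_example2}. Since both theorems already incorporate the primal equations (\ref{eq:SO3_eoms_example}) and boundary conditions (\ref{eq:SO3_bcs_example}), only the adjoint/costate part of the necessary conditions needs to be reconciled.

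For the derivation of Theorem \ref{thm:SO3_example}, I would form the augmented functional
\beas
  \bar\Jcal = \int_0^T\Big(\tfrac{1}{2}\ll\taub,\taub\gg_* + \left<\Lambdab_1,\Rb^{-1}\dot\Rb - \Omegab\right> + \left<\dot\Mb - \ad_{\Omegab}^*\Mb - \taub,\Lambdab_2\right>\Big)\drm t,
\eeas
with $\Lambdab_1\in\so^*(3)$ attached to the kinematic constraint and $\Lambdab_2\in\so(3)$ attached to the dynamic constraint. The variation in $\taub$ is purely algebraic and, after invoking the pairing identities (\ref{eq:inner_products}), yields $\taub = \Lambdab_2$ (via the musical isomorphism). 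The variation in $\Omegab$, together with the self-adjointness property (\ref{eq:inertia_property}) and the identity (\ref{eq:ad_property}), produces the third equation. The variation in $\Rb$, using $\Wb = \Rb^{-1}\deltab\Rb$ and the computation $\deltab(\Rb^{-1}\dot\Rb) = \dot\Wb + \ad_{\Omegab}\Wb$, generates the second equation, with the curvature term $[\Rcal(\Jb(\Lambdab_2)^\sharp,\Omegab)\Omegab]^\flat$ arising exactly as in the derivation of (\ref{eq:tau_variations}).

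For the reduction to Theorem \ref{thm:SO3_example2}, I would solve the third equation algebraically for $\Lambdab_1$,
\beas
  \Lambdab_1 = -\Jb(\dot{\Lambdab}_2) - \Jb(\ad_{\Omegab}\Lambdab_2) + \ad_{\Lambdab_2}^*\Mb,
\eeas
and, using the identifications $\varsigmab = \Lambdab_2$ and $\etab = \Jb(\ad_{\Omegab}\varsigmab)$, rewrite this as $\Lambdab_1 = -\Jb(\dot\varsigmab) - \etab + \ad_{\varsigmab}^*\Mb$. Differentiating in time gives an expression for $\dot\Lambdab_1$ involving $\Jb(\ddot\varsigmab)$, $\dot\etab$ and $\frac{\drm}{\drm t}(\ad_{\varsigmab}^*\Mb)$; substituting both $\Lambdab_1$ and $\dot\Lambdab_1$ into the second equation of Theorem \ref{thm:SO3_example} and rearranging reproduces the fourth-order condition of Theorem \ref{thm:SO3_example2} term by term. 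The converse direction is immediate: given a solution of Theorem \ref{thm:SO3_example2}, setting $\Lambdab_2 = \varsigmab$ and defining $\Lambdab_1$ by the boxed formula above turns the fourth-order equation into the pair of first-order equations in $\Lambdab_1,\Lambdab_2$.

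The main obstacle is bookkeeping the musical isomorphisms $\sharp,\flat$ and the curvature contribution when varying with respect to $\Rb$: the multiplier $\Lambdab_1$ lives in $\so^*(3)$ while the constraint it pairs against lives in $\so(3)$, and the curvature term only appears after passing from the kinematic variation to the second-order variation of $\dot\Mb$. However, this is precisely the content of (\ref{eq:tau_variations}) established in Section \ref{subsec:Continuous_Variational}, so once that computation is reused the matching of the two theorems reduces to a direct linear elimination of $\Lambdab_1$ from a pair of first-order ODEs.
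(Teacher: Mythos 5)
Your proposal is correct and follows essentially the same route as the paper, whose one-line proof is to differentiate the $\Lambdab_2$ equation and use all three equations of Theorem \ref{thm:SO3_example} to eliminate the multipliers; your elimination of $\Lambdab_1=-\Jb(\dot\varsigmab)-\etab+\ad_{\varsigmab}^*\Mb$ followed by differentiation and substitution is the same computation, carried out explicitly and verified to reproduce the fourth-order equation term by term. The additional multiplier-based derivation of Theorem \ref{thm:SO3_example} and the explicit converse are consistent with the paper but not required for the corollary.
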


\begin{proof} In Theorem \ref{thm:SO3_example}, differentiate $\Lambdab_2$ once and then use all three
differential equations to replace $\Lambdab_1$ and $\Lambdab_2$ with
expressions involving only $\taub, ~\Mb$ and $\Omegab$.\end{proof}

\section{Discrete-Time Results}\label{sec:discrete_time}

\subsection{Problem Formulation}\label{subsec:formulation_discrete_example}

In this section we give the discrete version of the problem
introduced in Section \ref{subsec:formulation_continuous_example}.
So, we consider minimizing the norm squared of the control torque
$\taub$ subject to satisfaction of the discrete Lagrange--d'Alembert
principle for the rigid body whose configuration and body angular
velocity at time step $t_k$ are given by $\Rb_k\in\SO$ and
$\Omegab_k\in\so(3)$, respectively. The kinematic constraint may be
expressed as \be\label{eq:discrete_RB_kinematics}
  \Rb_{k+1} = \Rb_k\exp\left(h\Omegab_k\right)=\Rb_k\gb_k,
\ee where $h$ is the integration time step, $\exp:\so(3)\rightarrow\SO$ is the
exponential map and $\gb_k=\exp(h\Omegab_k)$. The boundary conditions are given
by $(\Rb^*_0,\Omegab^*_0)$ and $(\Rb^*_N,\Omegab^*_{N-1})$, where $t_0=0$ and
$N=T/h$ is such that $t_N=T$.

More generally, one considers the ansatz $\Rb_{k+1} =
\Rb_k\exp\left(\Omegab(h)\right)$, where $\Omegab(\cdot)$ is an interpolatory
curve in $\so(3)$ parameterized by the angular velocity at internal nodal
points. This allows one to construct Lie group variational integrators of
arbitrarily high order \cite{Leok:04}. To simplify the subsequent treatment, we
adopt (\ref{eq:discrete_RB_kinematics}) as the kinematic constraint, which
yields a first-order accurate Lie symplectic Euler method, which will
nevertheless have effective order two as it is symplectically conjugate to the
second-order accurate Lie St\"ormer--Verlet method (see,
\S\ref{sec:symplectic_equivalence}).

The reason we constrain $\Omegab$ at $t=h(N-1)$ instead of at $t=hN$
will become clear when we derive the discrete equations of motion in
Section \ref{sec:discrete_lag_RB}. A simple explanation for this is
that a constraint on $\Omegab_k\in\so(3)$ corresponds, by left
translations to a constraint on $\dot{\Rb}_k\in\Tsf_{\Rb_k}\SO$. In
turn, in the discrete setting and depending on the choice of
discretization, this corresponds to a constraint on the neighboring
discrete points
$\ldots,\Rb_{k-2},\Rb_{k-1},\Rb_{k+1},\Rb_{k+2},\ldots$. With our
choice of discretization (equation
(\ref{eq:discrete_RB_kinematics})), this corresponds to constraints
on $\Rb_k$ and $\Rb_{k+1}$. Hence, to ensure that the effect of the
terminal constraint on $\Omegab$ is correctly accounted for, the
constraint must be imposed on $\Omegab_{N-1}$, which entails some
constraints on variations at both $\Rb_{N-1}$ and
$\Rb_N$. We will return to this point later in the paper.

The discrete kinematic constraint ensures that the sequence $\Rb_k$ stays on
the rotation group, since the exponential of the angular velocity matrix
$\Omegab_k$, which is in the algebra $\so(3)$, is a rotation matrix, and the
rotation group is closed under matrix multiplication. This is natural to do in
the context of discrete variational numerical solvers (for both initial value
and two point boundary value problems).

Following the methodology of \cite{Junge:05}, we have the following optimal
control problem.

\begin{prb}\label{pb:SO3_discrete_example0} Minimize
\be\label{eq:SO3_cost_LD_discrete_example0}
  \Jcal &=&
  \sum_{k=0}^N\frac{1}{2}\ll\taub_k,\taub_k\gg_*
\ee subject to
\begin{enumerate}
    \item satisfying the \emph{discrete Lagrange--d'Alembert principle}:
\be\label{eq:SO3_LD_discrete_example0}
      \deltab\sum_{k=0}^{N-1}\frac{1}{2}\left<\Jb\left(\Omegab_k\right),\Omegab_k\right>+\sum_{k=0}^N\left<\taub_k,\Wb_k\right>=0,
    \ee
    subject to $\Rb_0=\Rb_0^*$, $\Rb_N=\Rb_N^*$ and $\Rb_{k+1} =\Rb_k\gb_k$,
    $k=0,1,\ldots,N-1$, where $\Wb_k$ is the variation vector field at time step $t_k$ satisfying
    $\deltab\Rb_k=\Rb_k\Wb_k$,
    \item and the \emph{boundary conditions} \be\label{eq:SO3_bcs_discrete_example0}
    \Rb_0&=&\Rb_0^*, ~\Omegab_0=\Omegab^*_0, \nn\\
    \Rb_N&=&\Rb_N^*, ~\Omegab_{N-1}=\Omegab^*_{N-1}.
\ee
\end{enumerate}
\end{prb}

In Problem \ref{pb:SO3_discrete_example0}, the discrete Lagrange--d'Alembert principle is used to derive the equations of motion for
the rigid body with initial and terminal configuration constraints.
Hence, we get a two point boundary value problem. The full
configuration and velocity boundary conditions come into the picture
when we study the optimal control problem. We will show that the
constraint of satisfying the Lagrange--d'Alembert principle in
Problem \ref{pb:SO3_discrete_example0} leads to the following
problem formulation, where the discrete rigid body equations of
motion replace the Lagrange--d'Alembert principle constraint. Only
when addressing the following optimal control problem will we need to include the velocity boundary conditions
in the derivation.

\begin{prb}\label{pb:SO3_discrete_example} Minimize
\be\label{eq:SO3_cost_LD_discrete_example}
  \Jcal &=&
  \sum_{k=0}^N\frac{1}{2}\ll\taub_k,\taub_k\gg_*
\ee subject to
\begin{enumerate}
    \item the \emph{discrete dynamics} \begin{align}\label{eq:SO3_eoms_discrete_example}
      &\Rb_{k+1} = \Rb_k\gb_k, ~k=0,\ldots,N-1\nn\\
      &\Mb_k =
      \Ad_{\gb_k}^*\left(h\taub_k+\Mb_{k-1}\right), ~k=1,\ldots,N-1,\nn\\
      &\Mb_k=\Jb\left(\Omegab_k\right), ~k=0,\ldots,N-1,
    \end{align}
    \item and the \emph{boundary conditions} \vspace*{-3mm}
    \be\label{eq:SO3_bcs_discrete_example}
    \Rb_0&=&\Rb^*_0, ~\Omegab_0=\Omegab^*_0, \nn\\
    \Rb_N&=&\Rb^*_N,
    ~\Omegab_{N-1}=\Omegab^*_{N-1}.
\ee
\end{enumerate}
\end{prb}

Regarding terminal velocity conditions, note that
in the second of equations (\ref{eq:SO3_eoms_discrete_example}) if
we let $k=N$ we find that $\Omegab_N$ appears in the equation. A constraint on $\Omegab_N$ dictates
constraints at the points $\Rb_N$ and $\Rb_{N+1}$ through the first
equation in (\ref{eq:SO3_eoms_discrete_example}). Since we only
consider time points up to $t=Nh$, we can not allow $k=N$ in the
second of equations (\ref{eq:SO3_eoms_discrete_example}) and hence
our terminal velocity constraints are posed in terms of
$\Omegab_{N-1}$ instead of $\Omegab_N$.

As mentioned above, $\Wb_k$ is a variation vector field associated with the
perturbed group element $\Rb_k^{\epsilon}$. Likewise, we need to define a
variation vector field associated with the element $\gb_k=\exp(h\Omegab_k)$.
First, let the perturbed variable $\gb_k^{\epsilon}$ be defined by
\be\label{eq:variations_in_gk} \gb_k^{\epsilon} = \gb_k \exp(\epsilon h \deltab
\Omegab_k),\ee where
\be\deltab\Omegab_k=\frac{\partial\Omegab_k^{\epsilon}}{\partial
\epsilon}\bigg|_{\epsilon=0}.\nn\ee  Note that
$\gb_k^{\epsilon}\big|_{\epsilon=0}=\gb_k$ as desired. Moreover, we
have \be\label{eq:Deltab_g} \deltab \gb_k
  = \gb_k (h\deltab\Omegab_k) \exp(\epsilon h \delta \Omegab_k)\big|_{\epsilon=0}= h\gb_k
  \deltab\Omegab_k.\ee
This will be needed later when taking variations.

\subsection{The Discrete Lagrange--d'Alembert Principle and the Rigid
Body Equations of Motion}\label{sec:discrete_lag_RB}

In this section we derive the discrete forced rigid body equations
of motion (equations (\ref{eq:SO3_eoms_discrete_example})) starting
with the discrete Lagrange--d'Alembert principle. We begin by
rewriting the kinematic constraint as
$\exp^{-1}\left(\Rb^{-1}_k\Rb_{k+1}\right)=h\Omegab_k$, which is an
expression on the Lie algebra $\so(3)$, and appending it to the
Lagrangian \be
  0&=&\sum_{k=0}^N\left<\taub_k,\Wb_k\right>+\deltab\sum_{k=0}^{N-1}\bigg(\frac{1}{2}\left<\Jb\left(\Omegab_k\right),\Omegab_k\right>\nn\\
  &&+\left<\Mb_k,\frac1{h}\exp^{-1}\left(\Rb^{-1}_k\Rb_{k+1}\right)-\Omegab_k\right>\bigg),\nn
\ee where $\Mb_k\in\so^*(3)$, $k=0,1,\ldots,N-1$, are Lagrange multipliers.
These multipliers enforce the kinematic discrete equations. We could have added
additional terms to enforce the configuration boundary conditions, allowing for
$\Wb_0$ and $\Wb_N$ to be arbitrary and non-zero. Instead, we elect to enforce
the constraints by requiring $\Wb_0=0$ and $\deltab\Omegab_0=0$. These two
approaches are of course equivalent.

Taking variations of $\frac1{h}\exp^{-1}\left(\Rb^{-1}_k\Rb_{k+1}\right)$
$-\Omegab_k=0$ is equivalent to taking variations of the original expression
$\Rb^{-1}_k\Rb_{k+1} =\exp\left(h\Omegab_k\right)$, and is easier to compute
since it as an expression over the Lie algebra. Once the variations are
computed, one can easily obtain the Lie algebra-equivalent of the variations as
follows. First take variations of the kinematics
(\ref{eq:discrete_RB_kinematics}) to get
$-\Rb_k^{-1}\left(\deltab\Rb_k\right)\Rb^{-1}_k\Rb_{k+1}+\Rb^{-1}_k\deltab\Rb_{k+1}=h\gb_k\cdot\deltab\Omegab_k$,
which is equivalent to $-\Wb_k\gb_k+\gb_k\Wb_{k+1}=h\gb_k\deltab\Omegab_k$, or
\be\label{eq:SO3_variations_example_discrete}
\deltab\Omegab_k=\frac{1}{h}\left[-\Ad_{\gb_k^{-1}}\Wb_k+\Wb_{k+1}\right]. \ee
Note that this is an expression over the Lie algebra $\so(3)$.

After simple algebraic and re-indexing operations, the Lagrange--d'Alembert principle gives \begin{align}
  &0=\left<\taub_0-\frac{1}{h}\Ad_{\gb_0^{-1}}^*\Mb_{0},\Wb_0\right>
  +\left<\taub_N+\frac{1}{h}\Mb_{N-1},\Wb_N\right>\nn\\
  &+\sum_{k=0}^{N-1}\left<\Jb\left(\Omegab_k\right)-\Mb_k,\deltab\Omegab_{k}\right>\nn\\&
  +\sum_{k=1}^{N-1}\left<\taub_k
  -\frac{1}{h}\Ad_{\gb_k^{-1}}^*\Mb_{k}+\frac{1}{h}\Mb_{k-1},\Wb_k\right>.\nn
\end{align} By the boundary conditions $\Rb_0=\Rb_0^*$ and $\Rb_N=\Rb_N^*$,
we have $\Wb_0=0$ and $\Wb_N=0$. Since $\deltab\Omegab_k$, $k=0,\ldots,N-1$,
and $\Wb_k$, $k=1,\ldots,N-1$, are arbitrary and independent, then the
Lagrange--d'Alembert principle requires that the equations
(\ref{eq:SO3_eoms_discrete_example}) hold true. The variables $\Mb_k$,
$k=0,\ldots,N-1$, are of course nothing but the discrete angular momentum of
the rigid body.
 The equations
(\ref{eq:SO3_eoms_discrete_example}) can be viewed in two ways. The first is to
consider the two point boundary value problem where we retain the terminal
condition on $\Rb_N$. In this case a (constrained) variety of a combination of
control torques $\taub_k$, $k=0,\ldots,N$, and initial velocity conditions
$\Omegab_0$ can be chosen to drive the rigid body from the initial condition
$\Rb_0$ to the terminal condition $\Rb_N$. The second view is to treat it as an
initial value problem by ignoring any terminal configuration constraints. In
this case $\Wb_N\ne 0$ and any combination of control torques $\taub_k$,
$k=0,\ldots,N$, and initial velocity conditions $\Omegab_0$ can be chosen
freely.

\emph{\textbf{A Direct Approach.}} Taking direct variations of the
cost functional we obtain \begin{align}
  &0=
  \left<\taub_0-\frac{1}{h}\Ad_{\gb_0^{-1}}^*\Jb\left(\Omegab_{0}\right),\Wb_0\right>\nn\\
  &+\left<\taub_N+\frac{1}{h}\Jb\left(\Omega_{N-1}\right),\Wb_N\right>\nn\\
  &+\sum_{k=1}^{N-1}\left<\taub_k
  -\frac{1}{h}\Ad_{\gb_k^{-1}}^*\Jb\left(\Omegab_{k}\right)+\frac{1}{h}\Jb\left(\Omegab_{k-1}\right),\Wb_k\right>.\nn
\end{align} where we have used equation
(\ref{eq:SO3_variations_example_discrete}). This gives the same
equations of motion as those in equation
(\ref{eq:SO3_eoms_discrete_example}).

\paragraph{Simulation Results.} To test our results, we re-write the discrete equations
(\ref{eq:SO3_eoms_discrete_example}) for the subgroup $\SOtwo$. For
$\SOtwo$ we have
\be\label{eq:R_SO2}\Rb_k=\left[\begin{array}{cc}
\cos\theta_k & -\sin\theta_k \\
\sin\theta_k &\cos\theta_k
\end{array}\right], ~\Omegab_k=\left[\begin{array}{cc}
0 & -\omega_k \\
\omega_k & 0
\end{array}\right]\ee
and
\be\label{eq:exp_SO2}\exp\left(\Omegab_k\right)=\left[\begin{array}{cc}
\cos\omega_k & -\sin\omega_k \\
\sin\omega_k & \cos\omega_k
\end{array}\right].\ee
The inertia operation is simply given
by\be\label{eq:J_SO2}\Jb\left(\Omegab_k\right)=\left[\begin{array}{cc}
0 & -I\omega_k \\
I\omega_k & 0
\end{array}\right],\ee
where $I$ is the mass moment of inertia of the body about the
out-of-plane axis. One can check that
$\Ad_{\exp(\omegab)}\xib=\xib$ and that
$\Ad_{\exp(\omegab)}^*\etab=\etab$, for all $\xib,
~\omegab\in\so(2)$ and $\etab\in\so^*(2)$.

Then the equations (\ref{eq:SO3_eoms_discrete_example}) (treated as
an initial value problem) are given for $\SOtwo$ by
\be\label{eq:eoms_disc_so2}\theta_{k+1}&=&\theta_k+h\omega_k,
~k=0,\ldots,N-1\nn\\ \omega_k&=&\frac{h}{I}\tau_k+\omega_{k-1},
~k=1,\ldots,N-1\ee in addition to the initial
conditions $\theta_0=\theta_0^*, ~\omega_0=\omega_0^*$.

To verify the accuracy of our numerical computation, we give the corresponding
continuous-time equations of motion for the planar rigid body on $\SOtwo$ using
equations (\ref{eq:SO3_eoms_example}). The Lie bracket on $\SOtwo$ is
identically equal to zero. Hence, one can check that the equations
(\ref{eq:SO3_eoms_example}) are given by $\dot{\theta}=\omega$,
$\dot{\omega}=\frac{\tau}{I}$, where $\theta$, $\omega$ and $\tau$ are the
continuous time angular position, velocity and torque, respectively. We
integrate the equations using the torque $\tau(t)=\sin\left(\frac{\pi
t}{2}\right)$, $t\in[0,T]$. We use the following parameters for our
simulations: $T=10$, $I=1$, $\theta(0)=3$, $\omega(0)=4$ and we try three
different time steps corresponding to $N=1000,1500,2000$. The error between the
continuous- and discrete-time values of $\theta$ and $\omega$ are given in
Figure (\ref{fig:result_eoms_so2}). Note that the accuracy of the simulation
improves with increasing $N$.

\begin{figure}[!h]
\center{\includegraphics*[width = 3.7in]{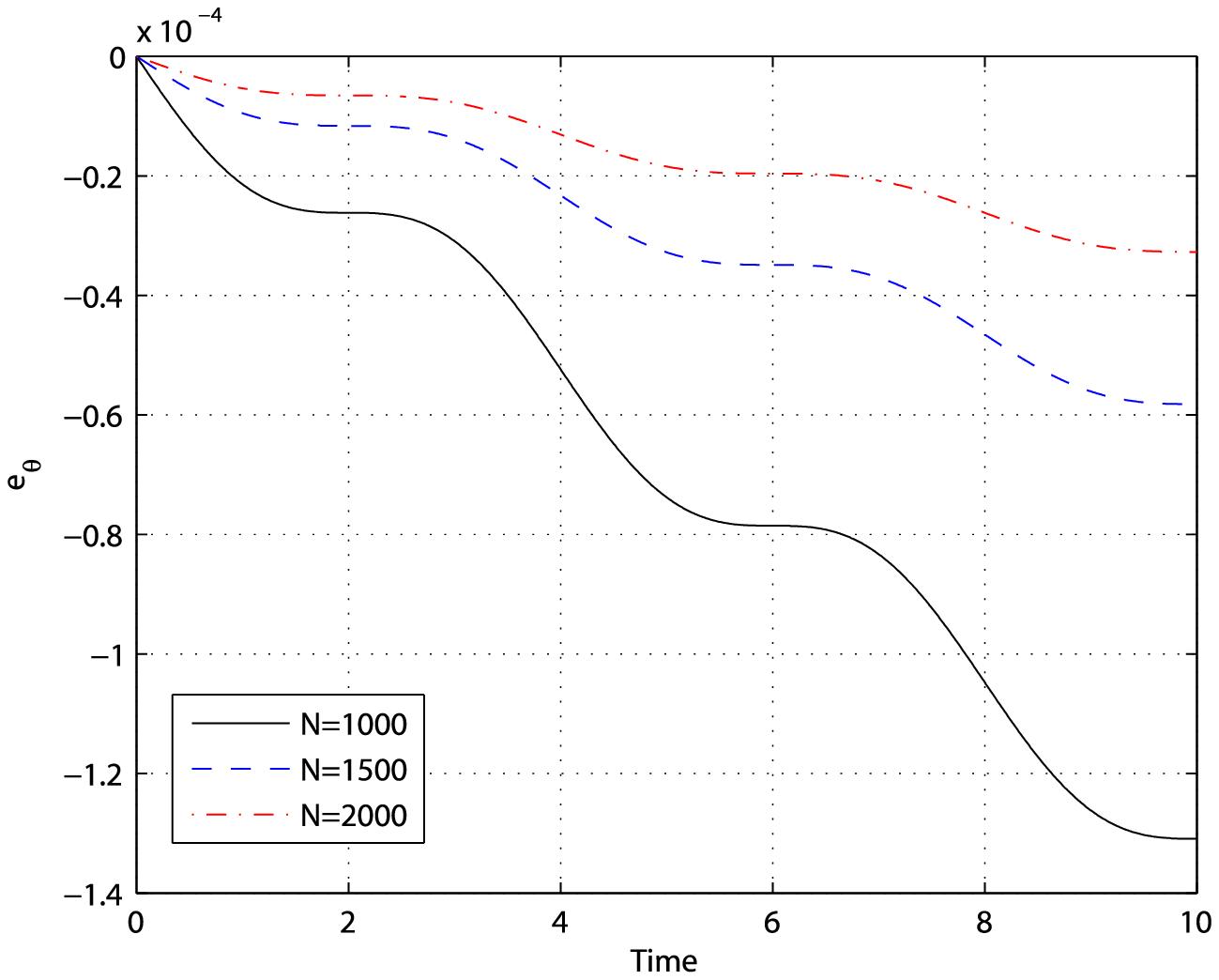}\\\includegraphics*[width
= 3.7in]{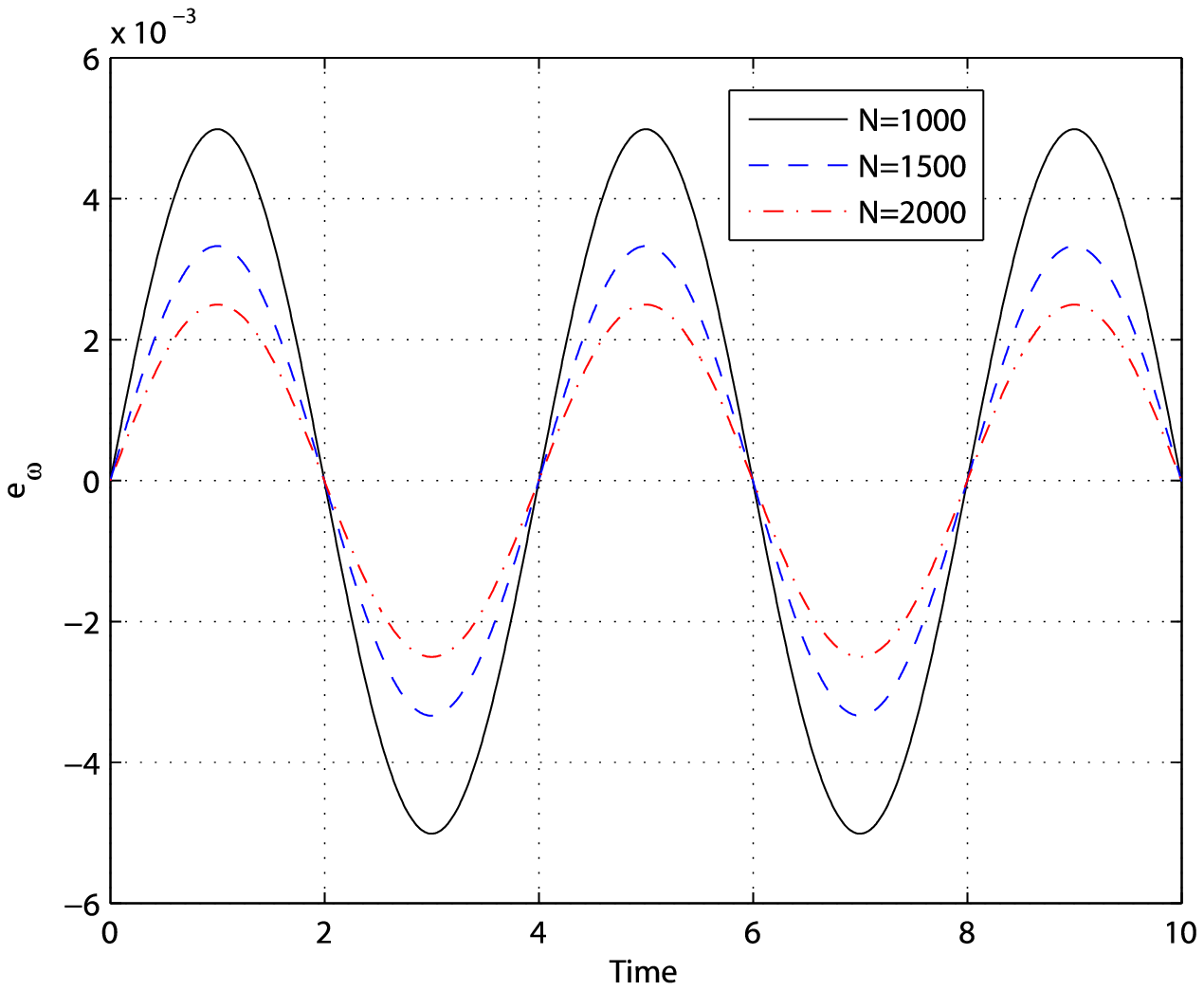}}\caption{Error dynamics on
$\SOtwo$.}\label{fig:result_eoms_so2}
\end{figure}

\begin{rem} Note that the discrete-time equations
(\ref{eq:eoms_disc_so2}) correspond to the Euler approximation for
the equations of motion. This is a check that our method
returns something familiar for a simple example as the planar rigid
body. However, we emphasize that on $\SO$ the discretization will
not necessarily be equivalent to any of the classical discretization
schemes. The discretization will generally result in a set of
nonlinear implicit algebraic equations.\end{rem}

\subsection{Discrete-Time Variational Optimal Control Problem}\label{subsec:Discrete_Variational}

We now address Problem \ref{pb:SO3_discrete_example} by first
forming the appended cost functional: \be
  \Jcal &=&
  \sum_{k=0}^N\frac{1}{2}\ll\taub_k,\taub_k\gg_*\nn\\
  &&+\sum_{k=0}^{N-1}\left<\Lambdab_k^1,-h\Omegab_k+\exp^{-1}\left(\Rb^{-1}_k\Rb_{k+1}\right)\right>\nn\\
  &&
  +\sum_{k=1}^{N-1}\left<\Mb_k-\Ad_{\gb_k}^*\left(h\taub_k+\Mb_{k-1}\right),\Lambdab_k^2\right>.\nn
\ee Writing $\Mb_k=\Ad_{\gb_k}^*\left(h\taub_k+\Mb_{k-1}\right)$ as
$\Mb_k=\gb_k^{-1}\left(h\taub_k+\Mb_{k-1}\right)\gb_k$ and taking
variations of this expression, we obtain \be
\deltab\Mb_k&=&-h\deltab\Omegab_k\gb_k^{-1}\left(h\taub_k+\Mb_{k-1}\right)\gb_k
\nn\\&&+\gb_k^{-1}\left(h\deltab\taub_k+\deltab\Mb_{k-1}\right)\gb_k\nn\\
&&+h\gb_k^{-1}\left(h\taub_k+\Mb_{k-1}\right)\gb_k\deltab\Omegab_k\nn\\
&=&-h\deltab\Omegab_k\Mb_k
+\Ad^*_{\gb_k}\left(h\deltab\taub_k+\Jb\left(\deltab\Omegab_{k-1}\right)\right)
\nn\\&&+h\Mb_k\deltab\Omegab_k\nn\\
&=&\Ad_{\gb_k}^*\left(h\deltab\taub_k+\Jb\left(\deltab\Omegab_{k-1}\right)\right)
+h\left[\Mb_k,\deltab\Omegab_k\right].\nn \ee  In obtaining this expression we
used the facts that $\deltab\gb_k=h\gb_k\deltab\Omegab_k$ and
$\deltab\left(\gb_k^{-1}\right)=-h\deltab\Omegab_k\gb_k^{-1}$. The latter
equality is obtained as follows. Taking variations of
$\left(\gb_k^{\epsilon}\right)^{-1}\left(\gb_k^{\epsilon}\right)=\Ib$, we
obtain\be 0&=&
\deltab\left(\gb_k^{-1}\right)\gb_k+h\gb_k^{-1}\gb_k\deltab\Omegab_k\nn\ee which
implies that
\be\deltab\left(\gb_k^{-1}\right)=-h\deltab\Omegab_k\gb_k^{-1},\nn\ee which is
the desired result.

We now also consider the velocity boundary conditions
$\Omegab_0=\Omegab_0^*$ and $\Omegab_{N-1}=\Omegab_{N-1}^*$. Note
that variations in $\deltab\Omegab_k$ directly induce variations in
$\Wb_{k+1}$. In particular, if $k=0$ and we have the initial
constraints $\Rb_0=\Rb_0^*$ and $\Omegab_0=\Omegab_0^*$, then
$\Wb_0=0$ and $\deltab\Omegab_0=0$. Using these two equations in
equation (\ref{eq:SO3_variations_example_discrete}), we find
that\be\label{eq:variations_Wb_1} \Wb_1=0.\ee At $k=N,N-1$, the
constraints $\Rb_N=\Rb_N^*$ and $\Omegab_{N-1}=\Omegab_{N-1}^*$
imply that $\Wb_N=0$ and $\deltab\Omegab_{N-1}=0$. Using these two
equations in equation (\ref{eq:SO3_variations_example_discrete}), we
find that\be\label{eq:variations_Wb_Nm1} \Wb_{N-1}=0.\ee The
observations stated in equations (\ref{eq:variations_Wb_1}) and
(\ref{eq:variations_Wb_Nm1}) are equivalent to having
\be\label{eq:Rb_constraints_1_Nm1}\Rb_1=\Rb_0^*\exp\left(h\Omegab_0^*\right),
~\Rb_{N-1}=\Rb_N^*\exp\left(-h\Omegab_{N-1}^*\right).\ee

Taking variations of the cost functional, we obtain \be
&&\deltab\Jcal=\sum_{k=0}^N\left<\deltab\taub_k,\taub_k^{\sharp}\right>\nn\\&&+\sum_{k=0}^{N-1}\bigg<\Lambdab_k^1,-\deltab\Omegab_k
+\frac{1}{h}\left[-\Ad_{\gb_k^{-1}}\Wb_k+\Wb_{k+1}\right]\bigg>\nn\\
&&+\sum_{k=1}^{N-1}\big<\Jb\left(\deltab\Omegab_k\right)-\Ad_{\gb_k}^*\left(h\deltab\taub_k+\Jb\left(\deltab\Omegab_{k-1}\right)\right)
\nn\\&&\hspace{0.5in}-h\left[\Mb_k,\deltab\Omegab_k\right],\Lambdab_k^2\big>,\nn
\ee where we have replaced $\deltab\Mb_k$ with
$\Jb\left(\deltab\Omegab_k\right)$. Collecting terms,
 setting $\deltab\Jcal$ to zero, and using the conditions
$\Wb_0=\Wb_1=\Wb_{N-1}=\Wb_N=\deltab\Omegab_0=\deltab\Omegab_{N-1}=0$ and the
fact that $\Jb(\cdot)$ is self-adjoint, we obtain the following theorem.

\begin{thm}\label{thm:lag_disc_so3} The necessary optimality conditions for the discrete
Problem \ref{pb:SO3_discrete_example} are \be\label{eq:lag_disc_so3}
\Rb_{k+1} &=& \Rb_k\gb_k, ~k=1,\ldots,N-2\nn\\
      \Mb_k &=& \Ad_{\gb_k}^*\left(h\taub_k+\Mb_{k-1}\right),
      ~k=1,\ldots,N-1\nn\\
  0 &=& \Lambdab_{k-1}^1-\Ad_{\gb_k^{-1}}^*\Lambdab_k^1, ~k=2,\ldots,N-2\nn\\
  0
  &=&-\Lambdab_k^1+\Jb\left(\Lambdab_k^2\right)-\Jb\left(\Ad_{\gb_{k+1}}\Lambdab_{k+1}^2\right)\\&&+
  h\left[\Mb_k,\Lambdab_k^2\right], ~k=1,\ldots,N-2\nn\\
\taub_k &=& h\left(\Ad_{\gb_k}\Lambdab_k^2\right)^{\flat}, ~k=1,\ldots,N-1\nn  \\
  \Mb_k&=&\Jb\left(\Omegab_k\right), ~k=0,\ldots,N-1,\nn
\ee and the boundary conditions \be
  \Rb_0&=&\Rb_0^*, ~\Rb_1=\Rb_0^*\gb_0^*, ~\Omegab_0=\Omegab_0^*\nn\\
  \Rb_N&=&\Rb_N^*, ~\Rb_{N-1}=\Rb_N^*\left(\gb_{N-1}^*\right)^{-1}, ~\Omegab_{N-1}=\Omegab_{N-1}^*\nn\\
  \taub_0&=&\taub_N=0,\nn
\ee where $\gb_0^*=\exp(h\Omegab_0^*)$ and
$\gb_{N-1}^*=\exp\left(h\Omegab_{N-1}^*\right)$.\end{thm}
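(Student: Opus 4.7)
The plan is to start from the expression for $\deltab\Jcal$ that has already been assembled in the paragraph immediately preceding the theorem, and to read off the necessary conditions by setting the coefficients of the three independent variations $\deltab\taub_k$, $\deltab\Omegab_k$, and $\Wb_k$ to zero one at a time. Step one is to push every $\Ad_{\gb_k}$ or $\Ad_{\gb_k^{-1}}$ across the natural pairing using the duality $\left<\etab,\Ad_{\gb}\xib\right>=\left<\Ad_{\gb}^*\etab,\xib\right>$, so that the free variation ends up alone on the right slot of $\left<\cdot,\cdot\right>$. Step two is to re-index each sum so that all occurrences of a given $\Wb_k$, $\deltab\Omegab_k$, or $\deltab\taub_k$ are collected under a single summation with matching limits. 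The shift $\Wb_{k+1}\mapsto\Wb_k$ in the $\Lambdab^1$-sum and the shift $\Jb(\deltab\Omegab_{k-1})\mapsto\Jb(\deltab\Omegab_k)$ in the $\Lambdab^2$-sum both generate stray endpoint terms at indices in $\{0,1,N-1,N\}$; these are precisely the places where the conditions $\Wb_0=\Wb_1=\Wb_{N-1}=\Wb_N=\deltab\Omegab_0=\deltab\Omegab_{N-1}=0$, recorded in (\ref{eq:variations_Wb_1})--(\ref{eq:variations_Wb_Nm1}), annihilate the boundary contributions and prevent extra spurious conditions.

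With the variations gathered, the reading-off is immediate. The coefficient of $\deltab\taub_k$ for $1\le k\le N-1$ compares the cost term $\taub_k^{\sharp}$ against the $\Lambdab^2$-multiplier contribution $h\Ad_{\gb_k}\Lambdab_k^2$ and produces $\taub_k=h(\Ad_{\gb_k}\Lambdab_k^2)^{\flat}$, while at the outermost indices $k=0$ and $k=N$ only the cost term is present, forcing $\taub_0=\taub_N=0$. The coefficient of $\Wb_k$ for $2\le k\le N-2$ gives the costate recursion $\Lambdab_{k-1}^1-\Ad_{\gb_k^{-1}}^*\Lambdab_k^1=0$. The coefficient of $\deltab\Omegab_k$ for $1\le k\le N-2$, after using self-adjointness of $\Jb$ and the identity $\left<[\Mb_k,\deltab\Omegab_k],\Lambdab_k^2\right>=\left<\deltab\Omegab_k,[\Mb_k,\Lambdab_k^2]\right>$, yields the mixed relation $-\Lambdab_k^1+\Jb(\Lambdab_k^2)-\Jb(\Ad_{\gb_{k+1}}\Lambdab_{k+1}^2)+h[\Mb_k,\Lambdab_k^2]=0$. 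Finally, the $\Lambdab^1_k$- and $\Lambdab^2_k$-coefficients reproduce the primal equations $\Rb_{k+1}=\Rb_k\gb_k$, $\Mb_k=\Ad_{\gb_k}^*(h\taub_k+\Mb_{k-1})$, and $\Mb_k=\Jb(\Omegab_k)$, which have already been derived in Section \ref{sec:discrete_lag_RB}.

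The remaining stated boundary conditions $\Rb_1=\Rb_0^*\gb_0^*$ and $\Rb_{N-1}=\Rb_N^*(\gb_{N-1}^*)^{-1}$ are not extra optimality conditions but direct consequences of the discrete kinematic constraint $\Rb_{k+1}=\Rb_k\gb_k$ at $k=0$ and $k=N-1$ combined with the prescribed velocity data $\Omegab_0=\Omegab_0^*$ and $\Omegab_{N-1}=\Omegab_{N-1}^*$; they are recorded explicitly in (\ref{eq:Rb_constraints_1_Nm1}). The main obstacle I expect in carrying this out is purely bookkeeping: after the shifts of summation index each sum has a different range of validity, and one must verify in each case that the boundary terms land on indices where a forced-zero variation is available, rather than generating an additional and unwanted equation. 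Tracking the commutator sign conventions consistently with $\ad_{\xib}^*\etab=-[\xib,\etab]$ throughout is the other point where care is needed.
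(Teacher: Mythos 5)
Your proposal is correct and takes essentially the same route as the paper, which likewise obtains the theorem by collecting the coefficients of $\deltab\taub_k$, $\deltab\Omegab_k$ and $\Wb_k$ in the assembled expression for $\deltab\Jcal$, using $\Wb_0=\Wb_1=\Wb_{N-1}=\Wb_N=\deltab\Omegab_0=\deltab\Omegab_{N-1}=0$ together with the self-adjointness of $\Jb$. The only blemish is the sign in your intermediate identity --- with the trace pairing one has $\left<\left[\Mb_k,\deltab\Omegab_k\right],\Lambdab_k^2\right>=-\left<\left[\Mb_k,\Lambdab_k^2\right],\deltab\Omegab_k\right>$ --- but since the relevant term in $\deltab\Jcal$ carries the prefactor $-h$, the coefficient you report, $+h\left[\Mb_k,\Lambdab_k^2\right]$, is nevertheless the correct one.
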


\paragraph{A Second Order Direct Approach.} Analogous to the direct
approach in continuous time, here we derive the necessary optimality conditions
in a form that does not involve the use of Lagrange multipliers. Using equation
(\ref{eq:SO3_variations_example_discrete}) and taking variation of the second
of equations (\ref{eq:SO3_eoms_discrete_example}), we obtain
\be\label{eq:variation_in_Tau}\deltab\taub_k&=&\Ad^*_{\gb_k^{-1}}\bigg(\frac{1}{h^2}\Jb\left(\Wb_{k+1}-\Ad_{\gb_k^{-1}}\Wb_k\right)
\nn\\&&
+\frac{1}{h}\left[\Wb_{k+1}-\Ad_{\gb_k^{-1}}\Wb_k,\Jb\left(\Omegab_k\right)\right]\bigg)\nn\\&&
-\frac{1}{h^2}\Jb\left(\Wb_k-\Ad_{\gb_{k-1}^{-1}}\Wb_{k-1}\right), \ee for
$k=1,\ldots,N-1$. Taking variations of the cost functional
(\ref{eq:SO3_cost_LD_discrete_example}) and substituting from equation
(\ref{eq:variation_in_Tau}) one obtains after a tedious but straight forward
computation an expression for
$\deltab\Jcal$ in terms of $\deltab\taub_k$:
\be
\deltab\Jcal&=&\sum_{k=1}^{N-1}\Bigg[\big<\Ad^*_{\gb_k^{-1}}\bigg(\frac{1}{h^2}\Jb\left(\Wb_{k+1}-\Ad_{\gb_k^{-1}}\Wb_k\right)
\nn\\&&+\frac{1}{h}\left[\Wb_{k+1}-\Ad_{\gb_k^{-1}}\Wb_k,\Jb\left(\Omegab_k\right)\right]\bigg)
-\frac{1}{h^2}\Jb\left(\Wb_k-\Ad_{\gb_{k-1}^{-1}}\Wb_{k-1}\right),\taub^{\sharp}_k\big>\Bigg]
\nn\\&&+\left<\deltab\taub_0,\taub_0^{\sharp}\right>+\left<\deltab\taub_N,\taub_N^{\sharp}\right>.\nn\ee
When $\deltab\Jcal$ is equated to zero (and after some algebraic rearrangement), one can obtain the boundary conditions on
$\taub_0,\taub_1,\taub_{N-1},\taub_{N}$ from the resulting equations below: \be \taub_0&=&0\nn\\
0&=&-\frac{1}{h^2}\left(\Jb\left(\taub_{1}^{\sharp}\right)+\Ad^*_{\gb_{1}^{-1}}\Jb\left(\Ad_{\gb_{1}^{-1}}\taub_{1}^{\sharp}\right)\right)
\nn\\&&-\frac{1}{h}\Ad^*_{\gb_{1}^{-1}}\left[\Jb\left(\Omegab_1\right),\Ad_{\gb_{1}^{-1}}\left(\taub_{1}^{\sharp}\right)\right]\nn\\
0&=&-\frac{1}{h^2}\left(\Jb\left(\taub_{N-1}^{\sharp}\right)+\Ad^*_{\gb_{N-1}^{-1}}\Jb\left(\Ad_{\gb_{N-1}^{-1}}\taub_{N-1}^{\sharp}\right)\right)
\nn\\&&-\frac{1}{h}\Ad^*_{\gb_{N-1}^{-1}}\left[\Jb\left(\Omegab_{N-1}\right),\Ad_{\gb_{N-1}^{-1}}\left(\taub_{N-1}^{\sharp}\right)\right]\nn\\
\taub_N&=&0\nn\ee as well as discrete evolution equations that are written in
algebraic nonlinear form
as:\be\label{eq:DiscreteDirect}0&=&-\frac{1}{h^2}\bigg(\Jb\left(\taub_k^{\sharp}\right)-\Ad^*_{\gb_k^{-1}}\Jb\left(\taub_{k+1}^{\sharp}\right)
\nn\\&&-\Jb\left(\Ad_{\gb_{k-1}^{-1}}\taub_{k-1}^{\sharp}\right)
+\Ad^*_{\gb_{k}^{-1}}\Jb\left(\Ad_{\gb_{k}^{-1}}\taub_{k}^{\sharp}\right)\bigg)
\nn\\&&-\frac{1}{h}\bigg(\Ad^*_{\gb_{k}^{-1}}\left[\Jb\left(\Omegab_{k}\right),\Ad_{\gb_{k}^{-1}}\left(\taub_{k}^{\sharp}\right)\right]
\nn\\&&-\frac{1}{h}\left[\Jb\left(\Omegab_{k-1}\right),\Ad_{\gb_{k-1}^{-1}}\left(\taub_{k-1}^{\sharp}\right)\right]\bigg),\ee
for $k=2,\ldots,N-2$.

The following section shows that while our discrete approximation
(\ref{eq:discrete_RB_kinematics}) is formally first-order accurate, it is
symplectically equivalent to the second-order accurate St\"ormer--Verlet
method, and hence has effective order two.

\section{Lie Symplectic Euler and Symplectic Equivalence}\label{sec:symplectic_equivalence}

Notice that the discrete Lagrangian adopted in our paper is obtained by approximating the velocity as a constant over the timestep $h$, and by approximating the integral in time by $\int_{t_1}^{t_2} f(t) dt\approx (t_2-t_1) f(t_1)$. In the Lie group setting, the constant angular velocity approximation corresponds to the condition,
\[\Rb_{k+1}=\Rb_k\exp(h\Omegab_k)\]
or equivalently,
\[\Omegab_k=\frac{1}{h}\exp^{-1}(\Rb_k^{-1}\Rb_{k+1}).\]
When we let $G=\mathbb{R}^n$, and we adopt the notation $(\qb,\vb)\in
T\mathbb{R}^n$, we obtain,
\[\vb_k=\frac{\qb_{k+1}-\qb_k}{h},\]
which is a usual finite-difference approximation for the velocity. Consider then a Lagrangian of the form,
\[L(\qb,\vb)=\frac{1}{2}\vb^T M \vb-V(\qb).\]
Approximating the action integral from $0$ to $h$ using a constant velocity
approximation and a quadrature formula, yields,
\[ \int_0^h L(\qb(t), \vb(t)) dt \approx \int_0^h L\Bigl(\qb(t),\frac{\qb_{k+1}-\qb_k}{h}\Bigr) dt \approx h L\Bigl(\qb_k,\frac{\qb_{k+1}-\qb_k}{h}\Bigr). \]
We then choose as our discrete Lagrangian,
\[ L_d(\qb_k,\qb_{k+1}) = h L\Bigl(\qb_k,\frac{\qb_{k+1}-\qb_k}{h}\Bigr) =h\left[\frac{1}{2}\Bigl(\frac{\qb_{k+1}-\qb_k}{h}\Bigr)^T M \Bigl(\frac{\qb_{k+1}-\qb_k}{h}\Bigr) - V(\qb_k)\right].\]
The discrete Euler--Lagrange equations,
\[ D_2 L_d(\qb_{k-1},\qb_k) + D_1 L_d(\qb_k, \qb_{k+1})=0,\]
yields,
\[ M\Bigl(\frac{\qb_{k}-\qb_{k-1}}{h}\Bigr)-M\Bigl(\frac{\qb_{k+1}-\qb_{k}}{h}\Bigr)-h\frac{\partial V}{\partial \qb}(\qb_k)=0,
\]
which induces an implicit update map
$(\qb_{k-1},\qb_k)\mapsto(\qb_k,\qb_{k+1})$. To obtain the corresponding
Hamiltonian update map, we push-forward this algorithm to $T^*Q$ by using the
discrete fiber derivative $\mathbb{F}L_d:Q\times Q\rightarrow T^*Q$, which
takes $(\qb_k,\qb_{k+1})\mapsto(\qb_{k+1}, D_2 L_d(\qb_k, \qb_{k+1}))$. In
particular, we have that,
\[\pb_{k+1}=D_2 L_d(\qb_k,\qb_{k+1})=M\Bigl(\frac{\qb_{k+1}-\qb_{k}}{h}\Bigr),\]
which implies
\begin{align}\label{eq:euler_position}
\qb_{k+1}=\qb_k+h M^{-1}\pb_{k+1}.
\end{align}
This allows us to rewrite the discrete Euler--Lagrange equations as,
\[
\pb_k-\pb_{k+1}-h\frac{\partial V}{\partial q}(\qb_k)=0,
\]
or equivalently,
\begin{align}\label{eq:euler_momentum}
\pb_{k+1}=\pb_k-h\frac{\partial V}{\partial q}(\qb_k).
\end{align}
Now, \eqref{eq:euler_position} and \eqref{eq:euler_momentum} are precisely the
symplectic Euler method applied to the corresponding Hamiltonian vector field,
as we shall see.

The corresponding Hamiltonian is given by,
\[H(\qb,\pb)=\frac{1}{2}\pb^T M^{-1} \pb + V(\qb).\]
Hamilton's equations yield,
\[
\begin{pmatrix}
\dot \qb\\ \dot \pb
\end{pmatrix}
=
\begin{pmatrix}
\frac{\partial H}{\partial \pb}\\
-\frac{\partial H}{\partial \qb}
\end{pmatrix}
=\begin{pmatrix}
M^{-1}\pb\\
-\frac{\partial V}{\partial \qb}
\end{pmatrix}.
\]
The symplectic Euler method has the form,
\begin{align*}
\qb_{k+1}&= \qb_k + h \dot \qb(\qb_k, \pb_{k+1}),\\
\pb_{k+1}&= \pb_k + h \dot \pb(\qb_k, \pb_{k+1}),\\
\intertext{which yields,}
\qb_{k+1}&=\qb_k+h M^{-1}\pb_{k+1},\\
\pb_{k+1}&=\pb_k+h \left(-\frac{\partial V}{\partial q}(\qb_k)\right),
\end{align*}
which is precisely what we obtained in \eqref{eq:euler_position} and
\eqref{eq:euler_momentum}. This demonstrates that our method is the
generalization of the symplectic Euler method to Lie groups, which has
important numerical consequences. While symplectic Euler is formally
first-order accurate, it is symplectically equivalent \cite{Su1993,LiSkZh1997}
to the second-order accurate St\"ormer--Verlet method \cite{HaLuWa2003}. This
means that one can obtain the St\"ormer--Verlet method $F_{\rm SV}$ by
conjugating the symplectic Euler method $F_{\rm E}$ with a symplectic
transformation $T$,
\[F_{\rm SV} = T F_{\rm E } T^{-1}.\]
In particular, numerical trajectories of symplectic Euler will shadow numerical
trajectories obtained using St\"ormer--Verlet. Consider the implications of
this symplectic equivalence for our discrete optimal control problem. Let the
boundary conditions be specified by $\qb_0, \qb_N$, and assume that we use
St\"ormer--Verlet to propagate the solution, then the boundary condition is
expressed as, $\qb_N= F_{\rm SV}^N \qb_0=(T F_{\rm E} T^{-1})^N \qb_0=T F_{\rm
E}^N T^{-1} \qb_0,$ which is equivalent to $\tilde{q}_N=T^{-1}\qb_N=F_{\rm E}^N
T^{-1}\qb_0=F_{\rm E}^N\tilde{q}_0$. This implies that if we preprocess the
boundary conditions $\qb_0, \qb_N$, to obtain
$\tilde{q}_0=T^{-1}\qb_0,\tilde{q}_N=T^{-1}\qb_N$, we could use symplectic
Euler at the internal stages to propagate the states and costates, and then
postprocess them to obtain the trajectory one would have obtained by using
St\"ormer--Verlet.

In practice, the shadowing result imparts the symplectic Euler method with the
same desirable qualitative properties as St\"ormer--Verlet, and it is not
necessary to postprocess the numerical solutions in order to achieve accurate
results. Since on an appropriate choice of charts, our Lie symplectic Euler
method reduces to symplectic Euler in coordinates, it follows that there is a
corresponding second-order Lie St\"ormer--Verlet method that our method is
symplectically equivalent to, and in particular, our method has effective order
two.

\section{Numerical Approach and Results}\la{sec:numerics}

The multiplier-free version of the first-order optimality equations,
equation (\ref{eq:DiscreteDirect}), in combination with the boundary
conditions,
\[ \Rb_0= \Rb_0^*,\;\ \Rb_N= \Rb_N^*,\;\ \Omegab_0=\Omegab_0^*,\;\ \mbox{and  }
\Omegab_{N-1}=\Omegab_{N-1}^*, \] leave the torques
$\taub_1,\ldots,\taub_{N-1}$, and the angular velocities
$\Omegab_1,\ldots,\Omegab_{N-2}$ as unknowns. By substituting the
relations $\gb_k=\exp(h\Omegab_k)$,
$\mathbf{M}_k=\mathbf{J}(\Omegab_k)$, we can rewrite the necessary
conditions (\ref{eq:DiscreteDirect}) as follows,

\begin{align*}
0=&-\frac{1}{h^2}\Big(\Jb(\taub^\sharp_k)-\Ad^*_{\exp(-h\Omegab_k)}\Jb(\taub^\sharp_{k+1})\\
&\qquad\qquad-\Jb(\Ad_{\exp(-h\Omegab_{k-1})}\taub^\sharp_{k-1}\\
&\qquad\qquad+\Ad^*_{\exp(-h\Omegab_k)}\Jb(\Ad_{\exp(-h\Omegab_k)}\taub^\sharp_k)\Big)\\
&-\frac{1}{h}\Big(\Ad^*_{\exp(-h\Omegab_k)}\Big[\Jb(\Omegab_k),\Ad_{\exp(-h\Omegab_k)}(\taub^\sharp_k)\Big]\\
&\qquad\qquad-\frac{1}{h}\Big[\Jb(\Omegab_{k-1}),\Ad_{\exp(-h\Omegab_{k-1})}(\taub^\sharp_{k-1})\Big]\Big),\\
\intertext{where $k=2,\ldots,N-2$, and the discrete evolution equations, given by line 2 of (\ref{eq:lag_disc_so3}), can be written as}
0=&\Jb(\Omegab_k)-\Ad^*_{\exp(h\Omegab_k)} (h\taub_k+\mathbf{J}(\Omegab_{k-1})),\\
\intertext{where $k=1,\ldots,N-1$. In addition, we use the boundary
conditions on $\Rb_0$ and $\Rb_N$, together with the update step given by line 1 of (\ref{eq:lag_disc_so3}) to give the last constraint,}
0=&\log\Big(\Rb_N^{-1} \Rb_0 \exp(h\Omegab_0)\ldots
\exp(h\Omegab_{N-1})\Big),
\end{align*}
where $\log$ is the logarithm map on $\SO$.

Note that while we use the direct variational approach to obtain the discrete
extremal solutions, an alternate way to obtain the discrete extremal solutions
would be to use Pontryagin's maximum principle. In particular, Bonnans and
Laurent-Varin \cite{BonL-Var:06} show that these two approaches are equivalent
in the context of symplectic partitioned Runge-Kutta schemes.

At this point it should be noted that one important advantage of the
manner in which we have discretized the optimal control problem is
that it is $\SO$-equivariant. This is to say that if we rotated all
the boundary conditions by a fixed rotation matrix, and solved the
resulting discrete optimal control problem, the solution we would
obtain would simply be the rotation of the solution of the original
problem. This can be seen quite clearly from the fact that the
discrete problem is expressed in terms of body coordinates, both in
terms of body angular velocities and body forces. In addition, the
initial and final attitudes $\Rb_0$ and $\Rb_N$ only enter in the
last equation as a relative rotation.

The $\SO$-equivariance of our numerical method is desirable, since
it ensures that our results do not depend on the choice of
coordinate frames. This is in contrast to methods based on
coordinatizing the rotation group using quaternions and Euler
angles.

Each of the equations above take values in $\mathfrak{so}(3)$.
Consider the Lie algebra isomorphism between $\mathbb{R}^3$ and
$\mathfrak{so}(3)$ given by the hat map,
\[\mathbf{v}=(v_1,v_2,v_3)\mapsto\hat{\mathbf{v}}=\begin{bmatrix}0 & -v_3 & _2\\ v_3 & 0 & -v_1\\-v_2 & v_1 & 0\end{bmatrix},\]
which maps 3-vectors to $3\times 3$ skew-symmetric matrices.
In particular, we have the following identities,
\begin{align*}
[\hat{\mathbf{u}},\hat{\mathbf{v}}]=(\mathbf{u}\times\mathbf{v})\,\hat{}\,,\quad
\Ad_\mathbf{A}\hat{\mathbf{v}}=(\mathbf{A}\mathbf{v})\,\hat{}\,.
\end{align*}
Furthermore, we identify $\mathfrak{so}(3)^*$ with $\mathbb{R}^3$ by
the usual dot product, that is to say if $\mathbf{\Pi}$,
$\mathbf{v}\in\mathbb{R}^3$, then $\langle
\mathbf{\Pi},\hat{\mathbf{v}}\rangle=\mathbf{\Pi}\cdot\mathbf{v}$.
With this identification, we have that
$\Ad^*_{\mathbf{A}^{-1}}\mathbf{\Pi}=\mathbf{A}\mathbf{\Pi}$. Using
the identities above, we write the necessary conditions using
matrix-vector products and cross products. Then, each of the
equations can be interpreted as $3$-vector valued functions, and the
system of equations can be considered as a $3(2N-3)$-vector valued
function, which is precisely the dimensionality of the unknowns.
This reduces the discrete optimal problem to a nonlinear root
finding problem.

The nonlinear system of equations was solved in MATLAB using the \texttt{fsolve} routine, where the  Jacobian is constructed column by column, and the $k$-th column is computed
using the following approximation \cite{LyMo1967},
\[ \frac{\partial\mathbf{F}}{\partial x_k}(\mathbf{x}) = \frac{1}{\epsilon}\operatorname{Im}[{\mathbf{F}(\mathbf{x}+i\epsilon\mathbf{e}_k})],\]
where $i=\sqrt{-1}$, $\mathbf{e}_k$ is a basis vector in the $x_k$
direction, and $\epsilon$ is of the order of machine epsilon. This
method is preferable to a finite-difference approximation, since it
does not suffer from round-off errors, which would otherwise limit
how small $\epsilon$ can be.

\begin{figure}[h!]
\begin{center}
\subfigure[Angular velocity and control torques]{\includegraphics[width=3in]{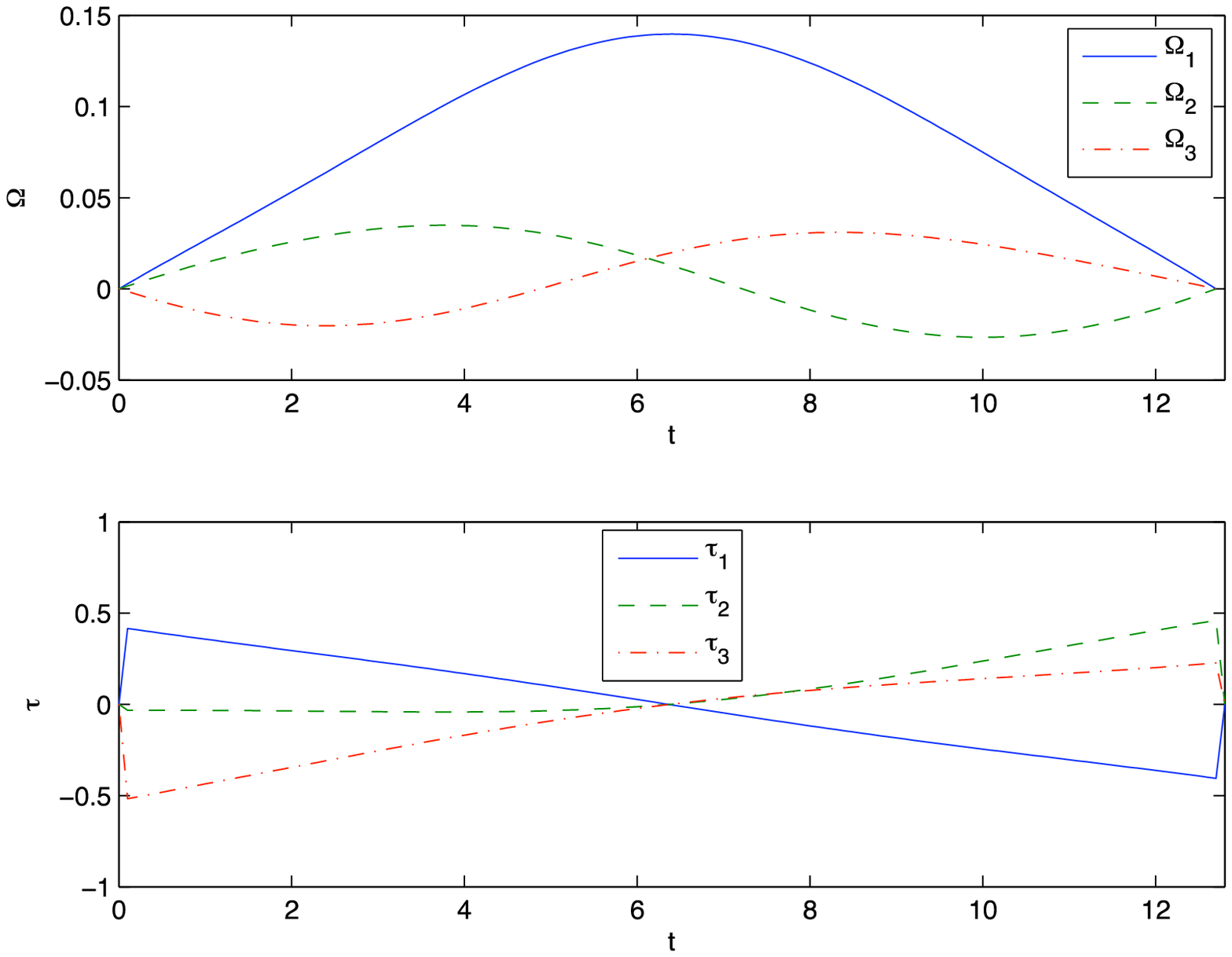}}
\subfigure[Principal axis and angle]{\includegraphics[width=3in]{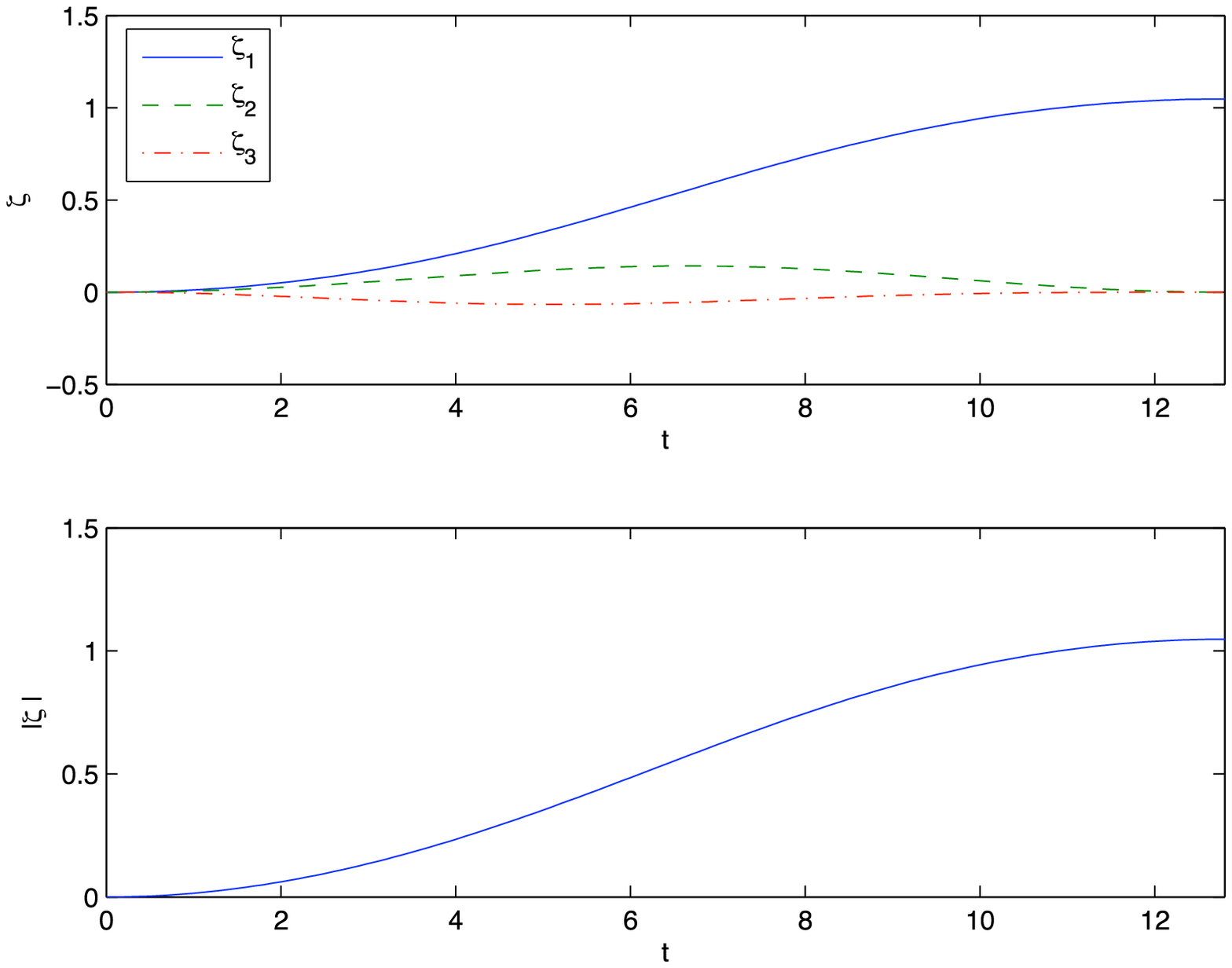}}
\subfigure[Instantaneous rotation axis]{\includegraphics[width=3in]{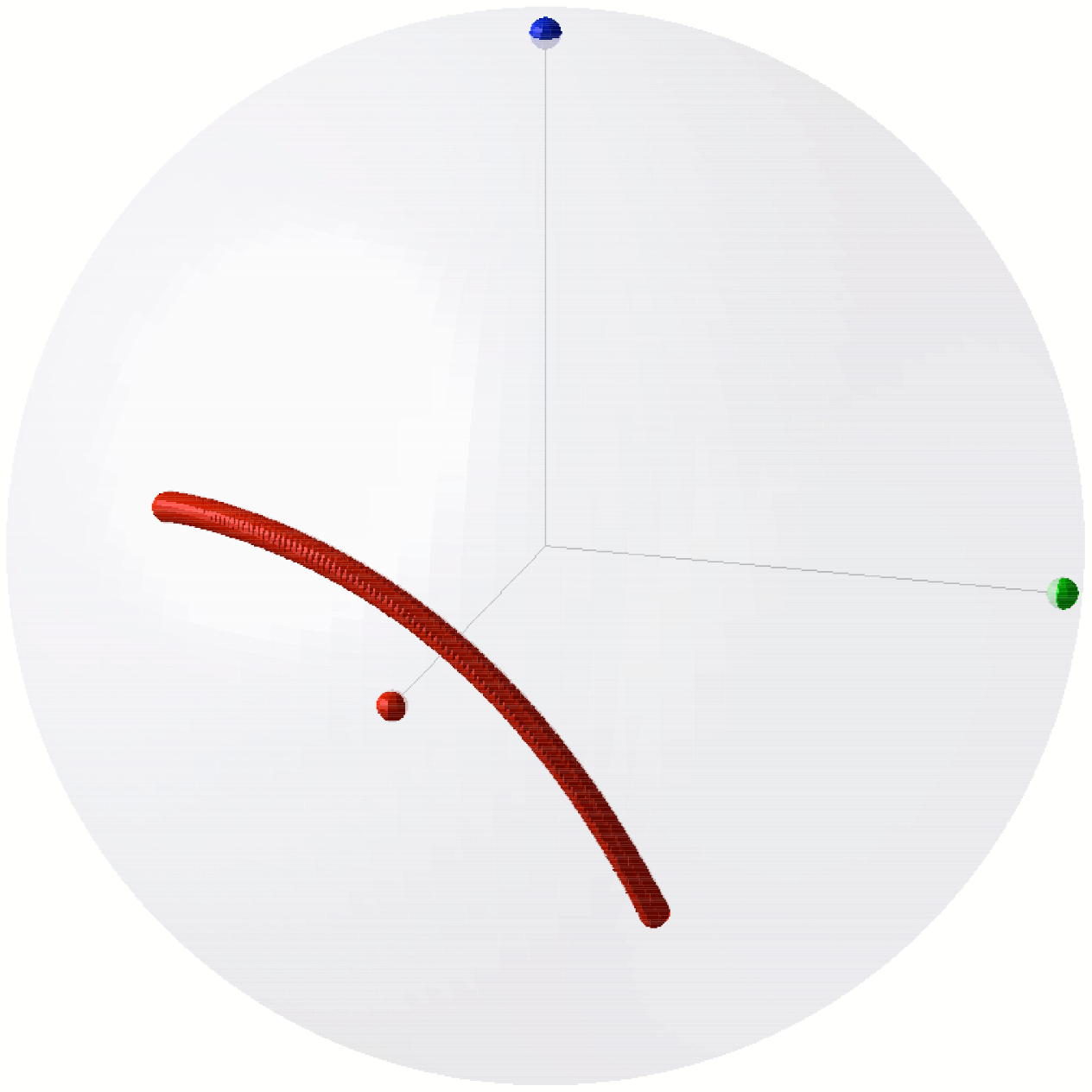}}
\caption{Discrete optimal rest-to-rest maneuver in
$\SO$.}\label{fig:result_so3}
\end{center}
\end{figure}

In our numerical simulation, we computed an optimal trajectory for a
rest-to-rest maneuver, as illustrated in Figure (\ref{fig:result_so3}). Here, the maneuver time is 12.8sec, $N=128$, and the moment of inertia is given by
\[\mathbf{J}=\begin{bmatrix}   13.25 &   -7.80 &  -11.40 \\
   -7.80 &   16.25 &    4.71 \\
  -11.40 &    4.71 &   18.37\end{bmatrix}.\]
The prescribed maneuver corresponds to a rotation by $\frac{\pi}{3}$ about the $x$-axis. Since the moment of inertia tensor is not a multiple of the identity, and the $x$-axis does not correspond to the axis of minimal inertia, the optimal trajectory does not just involve a pure rotation about the $x$-axis.
It is worth noting that the results are not rotationally symmetric about the
midpoint of the simulation interval, which is due to the fact that our choice
of update, $\Rb_{k+1}=\Rb_k\exp(h\Omegab_k)$, does not exhibit time-reversal
symmetry. In a forthcoming publication, we will introduce a reversible
algorithm to address this issue. In particular, this will involve explicitly
computing the stationarity conditions for the discrete optimal control problem
constrained by the time-symmetric Lie St\"ormer--Verlet method.

\begin{figure}[h!]
\begin{center}
\subfigure[Angular velocity and control torques]{\includegraphics[width=3in]{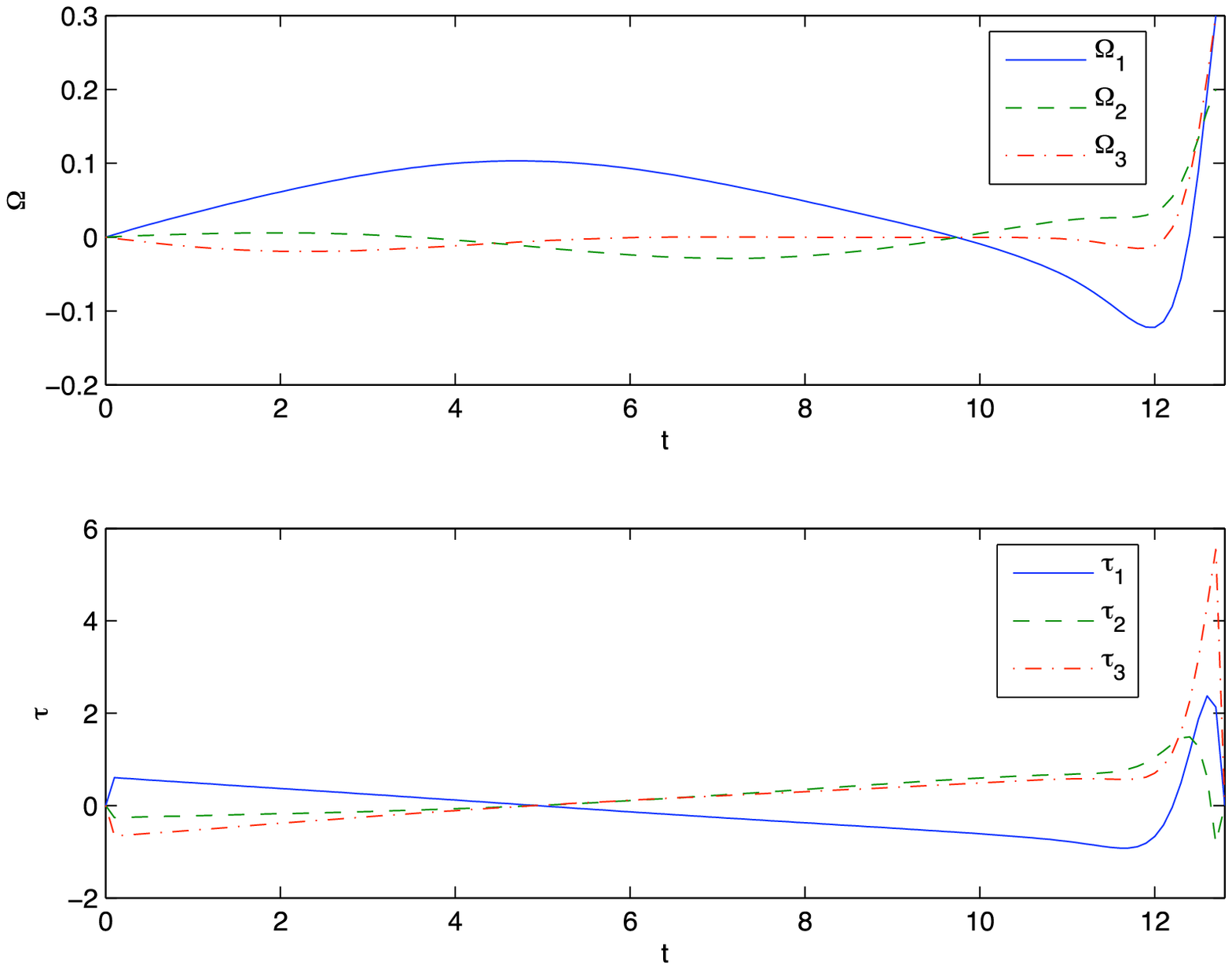}}
\subfigure[Principal axis and angle]{\includegraphics[width=3in]{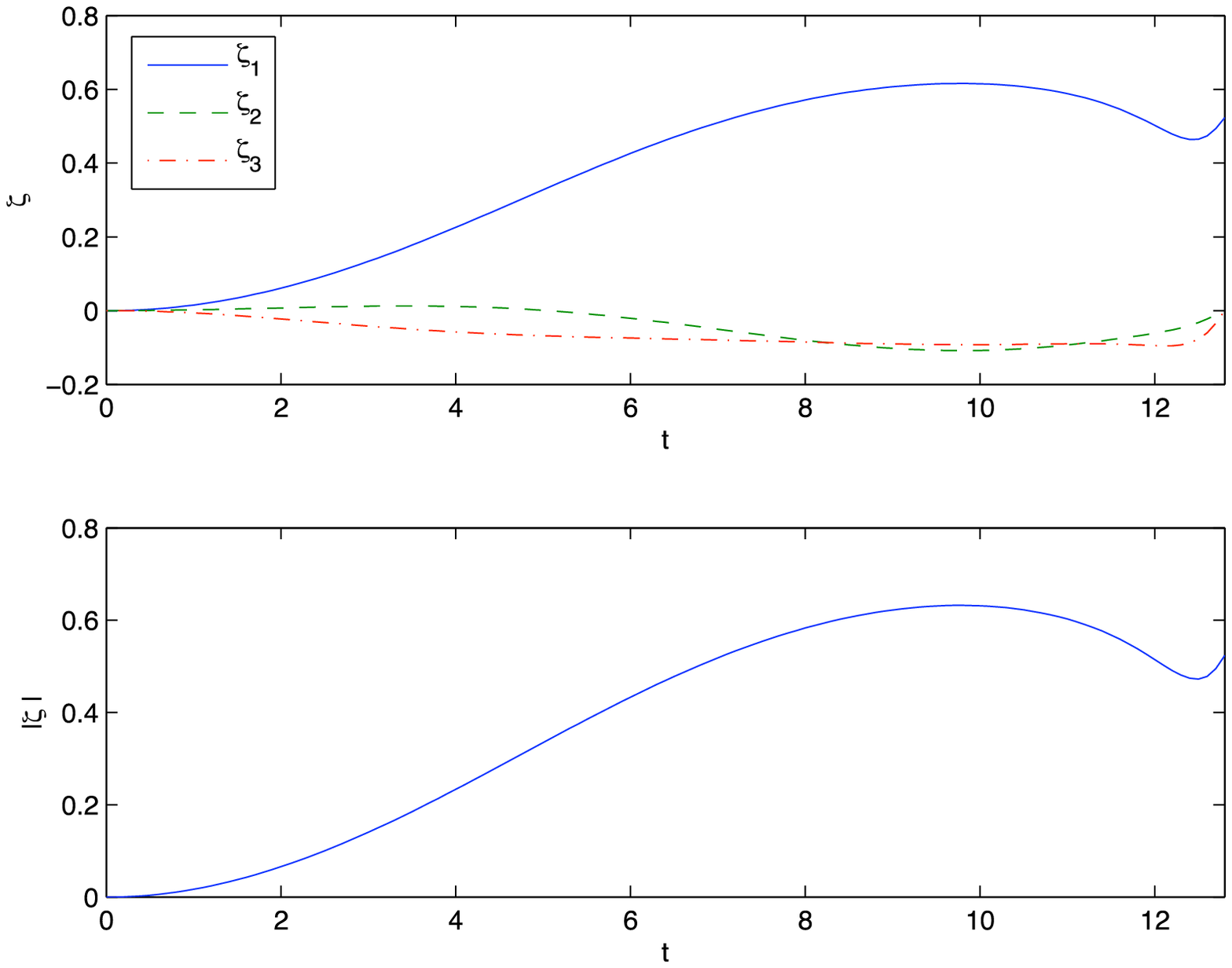}}
\subfigure[Instantaneous rotation axis]{\includegraphics[width=3in]{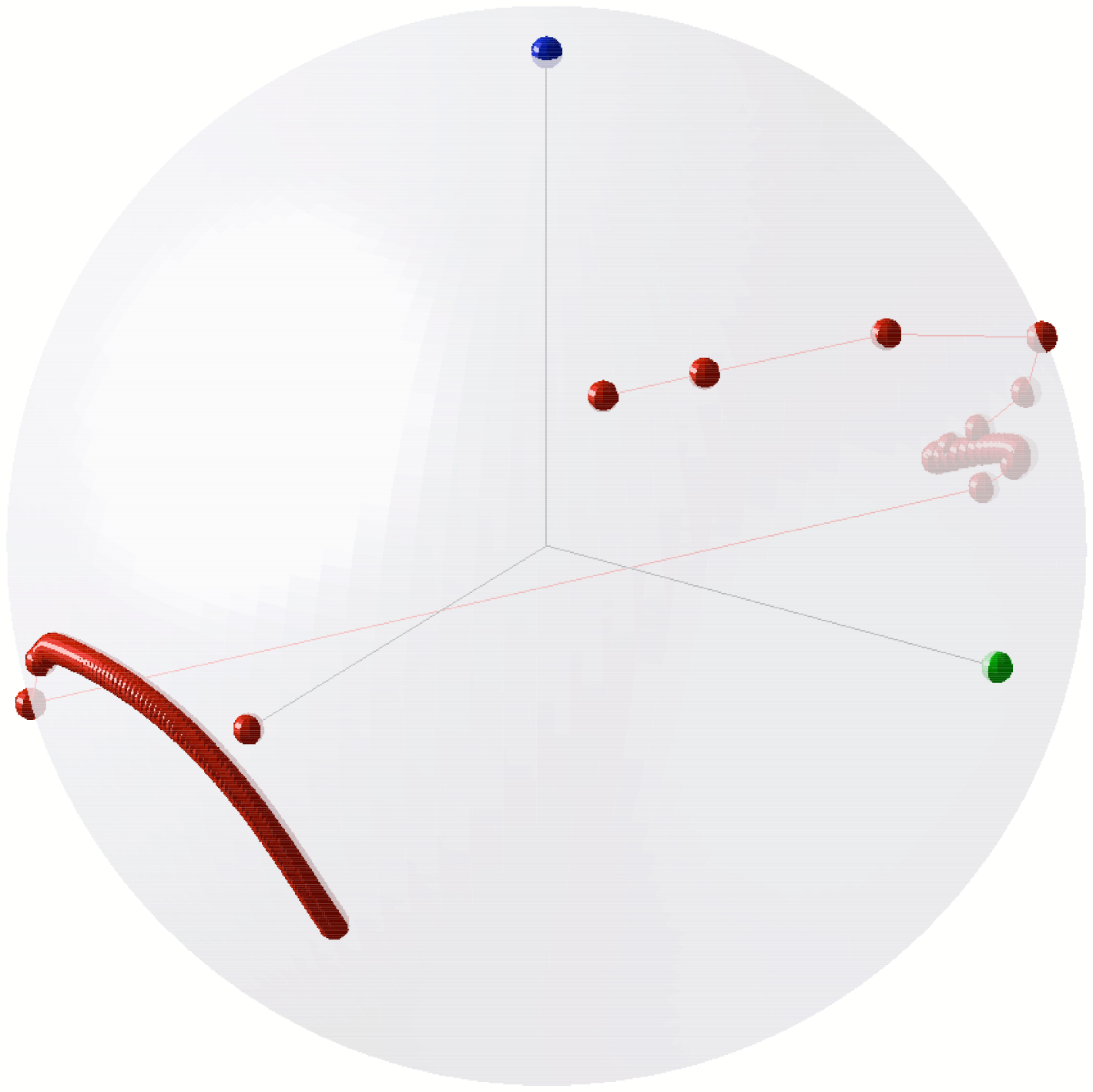}}
\caption{Discrete optimal slew-up maneuver in
$\SO$.}\label{fig:slewup_so3}
\end{center}
\end{figure}

We also present results for an optimal slew-up maneuver, illustrated in Figure (\ref{fig:slewup_so3}). This uses the same moment of inertia tensor as in the previous simulation, and the desired maneuver involves a rotation of $\frac{\pi}{6}$ about the $x$-axis from rest to a final angular velocity of $\Omega_{N-1}=\begin{bmatrix} 0.3 & 0.2 & 0.3\end{bmatrix}^{\mathrm{T}}$, over a maneuver time of 12.8sec, and $N=128$.

\section{Conclusion}\la{sec:conclusion}

In this paper we studied the continuous- and discrete-time optimal
control problem for the rigid body, where the cost to be minimized
is the external torque applied to move the rigid body from an
initial condition to some pre-specified terminal condition. In the
discrete setting, we use the discrete Lagrange--d'Alembert principle
to obtain the discrete equations of motion. The kinematics were
discretized to guarantee that the flow in phase space remains on the
Lie group $\SO$ and its algebra $\so(3)$. We described how the
necessary conditions can be solved for the general three-dimensional
case and gave a numerical example for a three-dimensional rigid body
maneuver.

The synthesis of variational mechanics with discrete-time optimal control is
particularly advantageous from the point of view of computational efficiency,
since the symplectic Euler method is symplectically conjugate to the
St\"ormer--Verlet method, and hence has effective order two. Consequently, for
our discrete-time optimal control method, the cost functional converges at a
rate which is characteristic of a second-order method, while being based on a
first-order method that is computationally cheaper.

Currently, we are investigating the use of the Pontryagin's maximum
principle with Lie group methods in continuous- and discrete-time to
obtain the necessary conditions. Additionally, we wish to generalize
the result to general Lie groups that have applications other than
the rigid body motion on $\SO$. In particular, we are interested in
controlling the motion of a rigid body \emph{in space}, which corresponds
to motion on the non-compact Lie group $\SEthree$.

\begin{center}
    \textsc{Acknowledgments}
\end{center}

The research of Islam Hussein was supported by a WPI Faculty Development Grant.
The research of Melvin Leok was partially supported by NSF grants DMS-0504747
and DMS-0726263 and a University of Michigan Rackham faculty grant. The
research of Anthony Bloch was supported by NSF grants DMS-030583, and
CMS-0408542.

\bibliographystyle{spmpsci}
\bibliography{bibliograph2}

\end{document}